\theoremstyle{plain}
\newtheorem{theorem}{Theorem}[section]
\newtheorem{lemma}[theorem]{Lemma}
\newtheorem{corollary}[theorem]{Corollary}
\theoremstyle{definition}
\newtheorem{definition}[theorem]{Definition}
\newtheorem{assumption}[theorem]{Assumption}
\theoremstyle{remark}
\newtheorem{claim}[theorem]{Claim}
\icmltitlerunning{Subsampling Confidence Bound for Persistent Diagram via Time-delay Embedding}
\begin{document}

\twocolumn[
  \icmltitle{Subsampling Confidence Bound for Persistent Diagram\\ via Time-delay Embedding}



  \icmlsetsymbol{equal}{*}

  \begin{icmlauthorlist}
    \icmlauthor{Donghyun Park}{equal,yyy}
    \icmlauthor{Junhyun An}{equal,yyy}
    \icmlauthor{Taehyoung Kim}{equal,yyy}
    \icmlauthor{Jisu Kim}{equal,yyy}
  \end{icmlauthorlist}

  \icmlaffiliation{yyy}{Department of Statistics, Seoul National University, Seoul, Republic of Korea}

  \icmlcorrespondingauthor{Donghyun Park}{joshua8699@snu.ac.kr}
  \icmlcorrespondingauthor{Junhyun An}{anjunhyun@snu.ac.kr}
  \icmlcorrespondingauthor{Taehyoung Kim}{lion384@snu.ac.kr}
  \icmlcorrespondingauthor{Jisu Kim}{jkim82133@snu.ac.kr}
  

  \vskip 0.3in
]

\printAffiliationsAndNotice{}

\begin{abstract}
Time-delay embedding is a fundamental technique in Topological Data Analysis (TDA) for reconstructing the phase space dynamics of time-series data.
Persistent homology effectively identifies global topological features, such as loops associated with periodicity. Nevertheless, a statistically rigorous way to quantify uncertainty in the resulting topological features has remained underdeveloped --- a problem that we aim to challenge.
First, we analyze the topological characterization of time-delay embeddings under both periodic and non-periodic conditions. Precisely, the embedded trajectory is homotopy equivalent to a circle ($S^1$) for periodic signals and is contractible for non-periodic ones. We also prove that the reach of the sliding window embedding is lower-bounded, ensuring stable persistence features.
Next, we propose a subsampling-based method to construct confidence bounds for persistence diagrams derived from time-delay embeddings. Specifically, we derive confidence bounds with asymptotic guarantees, under the assumption that the support satisfies standard manifold regularity.
Integrating the results, we propose a statistical testing framework to determine the periodicity of the underlying sampling function. This framework provides a principled statistical test for periodicity with asymptotically controlled type I and type II error rates.
Simulation studies demonstrate that our method achieves detection performance comparable to the Generalized Lomb-Scargle Periodogram on periodic data while exhibiting superior robustness in distinguishing non-periodic signals with time-varying frequencies, such as chirp signals. Finally, it successfully captured the periodicity when applied to the BIDMC dataset.

\end{abstract}

\section{Introduction}
Time series data consist of observations ordered in time, and an important goal is to understand their dynamics. Especially, detecting periodicity is crucial in many scientific signals that are driven by approximately repeating loops~\cite{Tominaga2010}. Classical methods such as spectral analysis and autocorrelation are explicitly designed to identify such periodic patterns~\cite{Priestley1981Spectral,Shumway2025TimeSeries}.

In many applications, time series are modeled as realizations of random functions. Functional data analysis (FDA) provides base expansion and functional principal component analysis (FPCA) methods for dimension reduction and inference~\cite{ramsay2005functional, ferraty2006nonparametric, yao2005functional, reiss2017methods, gertheiss2024functional}. However, these approaches may fail to capture geometric or topological regularities intrinsic to periodicity.

Topological Data Analysis (TDA) offers a complementary perspective by focusing on the geometric shape of the data. Persistent homology captures the evolution of topological features such as connected component, loop, and void across the filtration parameter and summarizes them in a persistence diagram~\cite{PunXiaLee2018}. Persistence diagrams are robust to noise and have proven effective in time‑series analysis, where one‑dimensional topological features directly capture cyclic dynamics~\cite{PEREIRA20156026}.



Traditional periodicity detection has been dominated by spectral analysis, most notably the Generalized Lomb-Scargle (GLS) method~\cite{Zechmeister2009} and various Fourier transform-based approaches~\cite{deckard2013design}. The SW1pers algorithm~\cite{Perea2015} introduced a novel testing method based on sliding window embeddings. In this framework, loops in the embedded trajectory reflect periodic or quasi-periodic motions, and the approach guarantees convergence of persistence diagrams as the embedding dimension increases. However, current research lacks 1) a statistical mechanism to quantify uncertainty within the diagrams and 2) a rigorous understanding of how periodicity affects the topology of sliding windows.


\textbf{Our contribution}
We have 1) theoretically investigated the topological structure of time-delay embedding for both periodic and non-periodic functions, 2) developed a subsampling-‑based method to derive confidence bounds for persistence diagrams, and 3) proposed a robust and statistically valid periodicity test compared to existing methods.

First, we provide a theoretical characterization of the topology of time-delay embeddings. Specifically, we prove that the embedded trajectory is homotopy equivalent to a circle ($S^1$) for periodic signals and to an interval for non-periodic ones. Furthermore, we establish that the reach of the sliding window embedding is lower-bounded, ensuring that the persistent homology captures a true persistent feature within a stable lifespan determined by this reach.

Second, we introduce a statistical inference framework for persistence diagrams. Using the Hausdorff distance between subsamples and the underlying support, we derive a data-dependent confidence radius with asymptotic guarantees. These confidence bounds allow us to quantify the statistical significance of topological loops associated with periodicity, addressing the lack of uncertainty measures in previous sliding-window methods.

Third, we propose a rigorous hypothesis test for periodicity with controlled error rates. We prove that our test achieves a valid significance level $\alpha$, with type I and type II errors controlled under specific conditions. Crucially, we show that as the sample size increases, the asymptotic behavior of the embedding ensures that these conditions are met, guaranteeing the consistency of our test.

\section{Preliminaries}
In this section, we introduce the mathematical and statistical framework for our analysis. First, we describe time-delay embedding, which constructs point clouds from time series data. Then, we review persistent homology as a tool for extracting topological features.


\subsection{Time-delay Embedding}
Time-delay embedding is a method for reconstructing the phase space of a time-series, which is particularly effective for periodic data~\cite{PhysRevLett.45.712, Perea2015, DBLP:books/springer/Takens81}. Given a continuous function \(f\in C(\mathbb{T},\mathbb{R})\), for an integer \(m\) and real number \(\tau\) the sliding window embedding \(SW_{m,\tau}:C(\mathbb{T},\mathbb{R})\to C( \mathbb{T},\mathbb{R}^{m+1})\) of \(f\) is defined as:
\[SW_{m,\tau}f(t) = 
\begin{bmatrix}
f(t) \\
f(t+\tau) \\
\vdots \\
f(t+m\tau)
\end{bmatrix}\in \mathbb{R}^{m+1}
\]
This process maps the time series to a point cloud \(X\) in high dimensional space \(\mathbb{R}^{m+1}\). The embedding relies on two main parameters: the embedding dimension \(m\) and the time delay \(\tau\). Choosing appropriate values for \(m\) and \(\tau\) is crucial to capture properties of time-series such as periodicity.

For a purely periodic signal, the reconstructed point cloud \(X\) generically traces a limit loop that is topologically equivalent to a one-dimensional circle \(S^1\)~\cite{DBLP:books/cup/Robinson11, DBLP:books/springer/Takens81,Perea2015}. In practice, noise and finite sampling can thicken or distort this manifold, but its essential topology remains a robust descriptor of the system's periodic nature.

See Section~\ref{sec:geometry_convergence_time_delay} for more details on time-delay embedding, in particular its geometric structure and convergence under a sequence of parameter choices $m$ and $\tau$.

\subsection{Topological Data Analysis (TDA)}

Topological Data Analysis (TDA) is a framework rooted in algebraic topology and computational geometry that provides structural properties of the data. It relies on the multi-scale analysis of topological features, a concept formalized through persistent homology~\cite{Edelsbrunner2002,zomorodian2005computing}. This section introduces the background on TDA, defining the algebraic structures and descriptors used to quantify shape information.

\subsubsection{Persistent Homology and Tameness}
Given a topological space $X$ and an integer $k$, we denote the $k$-th singular homology group by $H_k(X)$ and the $k$-th Betti number by $\beta_k(X) = \dim H_k(X)$~\cite{Munkres1984}.


An $a\in \mathbb{R}$ is a homological critical value of a function $f : X \to \mathbb{R}$ if there exists an integer $k$ such that, for all sufficiently small $\epsilon > 0$, the map $H_k(f^{-1}(-\infty, a - \epsilon]) \to H_k(f^{-1}(-\infty, a + \epsilon])$ induced by inclusion is not an isomorphism. In other words, homological critical values are levels where the homology of sub-level sets changes.

\begin{definition}
A function $f : X \to \mathbb{R}$ is tame if it has finitely many homological critical values and $H_k(f^{-1}(-\infty, a])$ is finite-dimensional for all $k \in \mathbb{Z}$ and $a \in \mathbb{R}$~\cite{Bookstein_1992}.
\end{definition}

Distance functions in finite point clouds are tame~\cite{DBLP:books/ams/EdelsbrunnerH10}.

\subsubsection{Persistence Diagrams}
For a tame function $f: X \to \mathbb{R}$, we write $F_x = H_k(f^{-1}(-\infty, x])$ and let $f_x^y : F_x \to F_y$ denote the map induced by inclusion for $x < y$. The image $F_x^y = \text{im}\,f_x^y$ is called a persistent homology group~\cite{Edelsbrunner2002}.

Let $(a_i)_{i=1..n}$ be the homological critical values of $f$. For $0 \le i < j \le n+1$, we define the multiplicity $\mu_i^j$ using persistent Betti numbers $\beta_x^y = \dim F_x^y$.
Setting $a_0 = -\infty$ and $a_{n+1} = +\infty$, the multiplicity is defined as
\[
\mu_i^j = \left(\beta_{a_i}^{a_j} - \beta_{a_{i-1}}^{a_j}\right) - \left(\beta_{a_i}^{a_{j+1}} - \beta_{a_{i-1}}^{a_{j+1}}\right).
\]
The persistence diagram $\mathcal{P}(f) \subset \bar{\mathbb{R}}^2$ consists of points $(a_i, a_j)$ with multiplicity $\mu_i^j$, together with all diagonal points counted with infinite multiplicity~\cite{Edelsbrunner2002}.

To analyze embedded point clouds, we compute persistence diagrams using a distance filtration. Each topological feature (connected component, loop, void) is represented as a birth-death point $(b,d)$ in the diagram, where $b$ is the scale at which the feature appears and $d$ is the scale at which it disappears~\cite{DBLP:books/ams/EdelsbrunnerH10}.

\subsubsection{Stability of Persistence Diagram}
A fundamental result establishes that persistence diagrams are stable under perturbations~\cite{CohenSteiner2005}. For two continuous tame functions $f$ and $g$ on a triangulable space $X$, the bottleneck distance between their persistence diagrams satisfies
\[
d_B(\mathcal{P}(f), \mathcal{P}(g)) \le \|f - g\|_\infty.
\]
This stability ensures that small changes to the input function, such as noise or measurement error, produce only small changes in the persistence diagram.

In this work, we focus on $1$-dimensional persistence diagrams, which capture the lifespans of loops. These features are particularly relevant for periodic signals, as they reveal the primary cyclic structure in time-delay embeddings~\cite{Perea2015}.

\subsubsection{Confidence bound of Persistence Diagram}\label{sec:Confidence_bound_Preliminary}

There has been extensive research on formulating confidence bounds for persistence diagrams.~\cite{Fasy_2014} introduced a subsampling-based confidence bound construction. Assuming that we observe a sample $S_n = \{X_1, \cdots, X_n\}$ from a distribution $P$ concentrated on a set $\mathbb{M}$,~\cite{Fasy_2014} proved that under regularity assumptions, subsampling-based confidence bounds are valid. When the subsample size $b$ increases to infinity at a rate $b = o(n/\log n)$, they defined the function
\[L(t) = \frac{1}{\binom{n}{b}} \sum_{j=1}^{\binom{n}{b}} I(d_H(S_{b,n}^{(j)},S_n)>t)\]
where $S_{b,n}^{(j)}$ denotes the $j$-th subsample of size $b$ of $S_n$.

Defining $c_{\alpha} = 2L^{-1}(\alpha)$, let $\hat{\mathcal{P}}$ be the persistence diagram of $\{X_1, \cdots ,X_n\}$ and $\mathcal{P}$ be the persistence diagram of $\mathbb{M}$. The following theorem holds. See Section~\ref{sec:confidence_subsampling_formal} for formal assumptions and statements.
\begin{theorem}[\cite{Fasy_2014} Theorem 3]
\label{thm:confidenceBound}
    Under the regularity condition of $\mathbb{M}$, for all large $n$,
    \[\mathbb{P}(d_B(\hat{\mathcal{P}},\mathcal{P}) > c_{\alpha}) \le \alpha + O\bigg(\frac{b}{n}\bigg)^{1/4}.\]
\end{theorem}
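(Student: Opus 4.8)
The plan is to pass from the bottleneck distance to the Hausdorff distance using the stability theorem, and then to control the Hausdorff distance by the subsampling quantile. First I would observe that $\hat{\mathcal{P}}$ and $\mathcal{P}$ are computed from the distance functions $d_{S_n}$ and $d_{\mathbb{M}}$ to the sample and to the support, respectively, and that $\|d_{S_n} - d_{\mathbb{M}}\|_\infty = d_H(S_n, \mathbb{M})$. The stability of persistence diagrams then gives the deterministic bound
\[
d_B(\hat{\mathcal{P}}, \mathcal{P}) \le d_H(S_n, \mathbb{M}),
\]
so the event $\{d_B(\hat{\mathcal{P}}, \mathcal{P}) > c_b\}$ is contained in $\{d_H(S_n, \mathbb{M}) > c_b\}$. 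It therefore suffices to prove $\mathbb{P}(d_H(S_n, \mathbb{M}) > c_b) \le \alpha + O((b/n)^{1/4})$, a statement that no longer involves homology at all.

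Next I would explain the factor of $2$ in $c_b = 2 L_b^{-1}(\alpha)$ through the triangle inequality for the Hausdorff distance. For a subsample $S_{b,n} \subseteq S_n$ we have
\[
d_H(S_n, \mathbb{M}) \le d_H(S_n, S_{b,n}) + d_H(S_{b,n}, \mathbb{M}),
\]
and the point is that both summands live on the scale of the Hausdorff distance of a size-$b$ sample: $d_H(S_n, S_{b,n})$ is exactly the quantity whose subsampling survival function is $L_b$, while $d_H(S_{b,n}, \mathbb{M})$ is, up to the without-replacement correction, distributed like $d_H(S_b, \mathbb{M})$. Hence controlling each term at the level $L_b^{-1}(\alpha)$ yields control of $d_H(S_n, \mathbb{M})$ at the level $2 L_b^{-1}(\alpha) = c_b$.

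The core of the argument is the validity of the estimate $L_b$, and here I would treat $L_b(t)$ as a complete U-statistic of order $b$ in the $X_i$. Writing its mean as $\mathbb{E}[L_b(t)] = \mathbb{P}(d_H(S_{b,n}, S_n) > t)$, I would first show that this mean is close to the true survival function $\bar{F}_{b}(t) := \mathbb{P}(d_H(S_b, \mathbb{M}) > t)$ of a genuine size-$b$ sample, the discrepancy arising only from sampling without replacement from $P_n$ versus i.i.d.\ sampling from $P$ and from the gap $d_H(S_n, \mathbb{M})$, both negligible when $b = o(n/\log n)$. I would then invoke Hoeffding's inequality for U-statistics to obtain exponential concentration of $L_b(t)$ about its mean at rate $\lfloor n/b\rfloor$, namely $\mathbb{P}(|L_b(t) - \mathbb{E}[L_b(t)]| > \varepsilon) \le 2\exp(-2\lfloor n/b\rfloor\varepsilon^2)$. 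Converting this uniform closeness of survival functions into closeness of quantiles shows that $L_b^{-1}(\alpha)$ exceeds the true $(1-\alpha)$-quantile of $\bar{F}_{b}$ up to an error of order $\varepsilon$.

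Finally I would assemble the pieces and optimize the rate. Under the regularity (positive-reach, or $(a,b)$-standard) assumption on $\mathbb{M}$, the tail of $d_H(S_b, \mathbb{M})$ is well behaved, so with probability at least $1 - \alpha$ a size-$b$ sample satisfies $d_H(S_b, \mathbb{M}) \le L_b^{-1}(\alpha) + O(\varepsilon)$; feeding this into the triangle-inequality bound of the second paragraph gives $d_H(S_n, \mathbb{M}) \le 2L_b^{-1}(\alpha) = c_b$ outside an event of probability $\alpha + O(\varepsilon) + 2\exp(-2\lfloor n/b\rfloor \varepsilon^2)$. Choosing $\varepsilon \asymp (b/n)^{1/4}$ balances the two error contributions and produces the claimed $O((b/n)^{1/4})$ remainder. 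I expect the main obstacle to be the quantile-inversion step: translating additive closeness of $L_b$ and $\bar{F}_{b}$ into a one-sided control of the inverse functions requires care, since quantiles are sensitive to flat regions of the survival function, and this is precisely where the regularity assumption on $\mathbb{M}$ — guaranteeing enough anti-concentration of $d_H(S_b, \mathbb{M})$ near its quantile — must be used.
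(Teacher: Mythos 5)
This statement is not proved in the paper at all: it is quoted verbatim from Fasy et al.\ \cite{Fasy_2014} (their Theorem 3), and the paper's only addition is to record the hypotheses A1--A2 in Appendix A. So there is no in-paper proof to compare against; your sketch can only be judged against the argument in the cited reference. At that level, your architecture is the right one — stability to reduce $d_B$ to $d_H(S_n,\mathbb{M})$, the factor of $2$ from a triangle inequality, and concentration of the U-statistic $L_b$ — but two steps as written do not go through.

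First, the triangle-inequality bookkeeping. You decompose $d_H(S_n,\mathbb{M})\le d_H(S_n,S_{b,n})+d_H(S_{b,n},\mathbb{M})$ and propose to control each summand at level $L_b^{-1}(\alpha)$. But $L_b$ estimates the law of $d_H(S_{b,n},S_n)$, and since $S_{b,n}\subset S_n\subset\mathbb{M}$ one has $d_H(S_{b,n},S_n)\le d_H(S_{b,n},\mathbb{M})$ pathwise, so $L_b^{-1}(\alpha)$ is a \emph{downward}-biased surrogate for the quantile of $d_H(S_{b,n},\mathbb{M})$ — the wrong direction for your second summand. The repair is the reverse inequality $d_H(S_{b,n},\mathbb{M})\le d_H(S_{b,n},S_n)+d_H(S_n,\mathbb{M})$ together with the fact that $d_H(S_n,\mathbb{M})$, which under the $(a,b)$-standard/positive-reach assumption is $O((\log n/n)^{1/d})$, is of strictly smaller order than the size-$b$ quantile $\asymp(\log b/b)^{1/d}$; this is precisely where $b=o(n/\log n)$ is consumed, and your sketch only gestures at it. Second, the rate. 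Hoeffding for U-statistics gives $\mathbb{P}(|L_b(t)-\mathbb{E}L_b(t)|>\varepsilon)\le 2\exp(-2\lfloor n/b\rfloor\varepsilon^2)$, and with $\varepsilon\asymp(b/n)^{1/4}$ the exponential term is $\exp(-2\lfloor n/b\rfloor\sqrt{b/n})=o((b/n)^{1/4})$, so the two contributions you claim to balance are not of the same order; the stated $(b/n)^{1/4}$ remainder actually arises from a second-moment bound, $\mathrm{Var}(L_b(t))=O(b/n)$, followed by Chebyshev/Markov, which gives $\varepsilon+O(\sqrt{b/n})/\varepsilon$ and is genuinely optimized at $\varepsilon\asymp(b/n)^{1/4}$. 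Your quantile-inversion worry at the end is legitimate but is handled in the reference by evaluating the concentration bound at the deterministic point $U_b^{-1}(\alpha+\varepsilon)$ rather than at the random $L_b^{-1}(\alpha)$.
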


\subsection{Subsampling}
Subsampling is a resampling method used to approximate the sampling distribution of a statistic by recomputing it in smaller subsets of the data. Unlike the standard bootstrap, subsampling typically draws subsamples of size $b$ without replacement, where $b = b_n$ satisfies $b \to \infty$ and $b/n \to 0$ as $n \to \infty$~\cite{PolitisRomano1994,PolitisRomanoWolf1999}. This condition ensures asymptotic validity under minimal assumptions, even in cases where the standard bootstrap may fail~\cite{BickelFreedman1981}.

Let $X_1, \dots, X_n$ be observations and $\hat{\theta}_n$ be an estimator of a parameter $\theta$. To estimate the distribution of the normalized root $R_n = a_n(\hat{\theta}_n - \theta)$, we consider subsamples $I \subset \{1, \dots, n\}$ of size $b$. The subsampling distribution is constructed from the values $a_b(\hat{\theta}_{b,I} - \hat{\theta}_n)$, where $\hat{\theta}_{b,I}$ is the statistic computed on the subsample. In practice, since evaluating all $\binom{n}{b}$ subsamples is computationally prohibitive, a Monte Carlo approximation is used by drawing $B$ random subsamples~\cite{PolitisRomanoWolf1999}.

For time-series or dependent data, block subsampling is often employed to preserve local dependence structures~\cite{Lahiri2003}. In the context of topological data analysis, subsampling provides a robust framework for inference on persistence diagrams, as demonstrated by~\cite{Fasy_2014}, by avoiding the strong smoothness assumptions required by other methods.

\section{Assumptions and Statistical Models}


We assume that there exists a true sampling function $f:\mathbb{R}\to\mathbb{R}$.
We sample points $t_{1},\ldots,t_{n}$ i.i.d. from a uniform distribution
on $\mathbb{T}=[T_{\min},T_{\max}]$. And, for any $m$ and $\tau$, we assume that we can observe 
$\{SW_{m,\tau}f(t_{j})\}_{1\leq j\leq n}\subset \mathbb{R}^{m+1}$.
In the above setting, we can argue that the image of the sliding window for each sampled point is, in fact, randomly sampled from the support, which is the closure of the image of the interval $\mathbb{T}$ by the sliding window map. The underlying measure might be different from the Hausdorff measure, which is actually a pushforward of the uniform distribution on $\mathbb{T}$.

To characterize the recurrence of a function within its underlying dynamics, we define periodic and non-periodic functions as follows.

\begin{definition}
Let $f:\mathbb{R}\to\mathbb{R}$ be a differentiable function, and
fix any $\Xi>0$. We say $f$ is $\Xi$-periodic if for any $t\in\mathbb{R}$,
\[
f(t+\Xi)=f(t).
\]
We say $f$ is periodic if $f$ is $\Xi$-periodic for some $\Xi>0$. We say $f$ is non-periodic
if for any $t_{1}\neq t_{2}\in\mathbb{R}$, 
\[
f(t_{1})\neq f(t_{2})\text{ or } f'(t_{1})\neq f'(t_{2}).
\]
\end{definition}

These definitions of periodicity and non-periodicity employed here shift the focus from scalar values $f(t)$ to the system's state trajectory in the phase plane $(f(t), f'(t))$. This follows a common approach in physics and dynamical systems, where a system is defined by its position and velocity rather than position alone. In this framework, \textit{periodicity} represents a closed orbit where the state recurs perfectly. Conversely, our definition of \textit{non-periodicity} describes a non-self-intersecting trajectory, ensuring that no state $(f, f')$ is ever revisited.



However, even if we have valid confidence interval for the sampling function, sometimes non-periodic functions can behave very similarly to a periodic function or vice versa, and they may not be distinguishable. To confront with this, for periodic testing, we need periodic or non-periodic to be $\epsilon$-distinguishable, as follows. See Figure~\ref{fig:nonperiodic} for illustrations of non-periodic functions.

\begin{definition}
Let $f:\mathbb{R}\to\mathbb{R}$ be a differentiable function with $\|f'\|_{\infty}\leq L_{1}$ for some $L_{1}<\infty$, and fix any $\Xi>0$. We say $f$ is $(\Xi,\epsilon)$-periodic if $f$ is $\Xi$-periodic, and there exists $K>0$ such that for any $t_{1},t_{2}\in\mathbb{R}$
with $\min_{n\in\mathbb{Z}}\left|t_{1}-t_{2}+n\Xi\right|\geq K\epsilon$,
\[
\left|f(t_{1})-f(t_{2})\right|\geq\epsilon\text{ or }\left|f'(t_{1})-f'(t_{2})\right|\geq\epsilon.
\]
We say $f$ is $\epsilon$-periodic if $f$ is $(\Xi,\epsilon)$-periodic
for some $\Xi>0$. We say $f$ is $\epsilon$-non-periodic if  there exists some $K>0$ such that for all $t_{1},t_{2}\in\mathbb{R}$ with $\left|t_{1}-t_{2}\right|\geq K\epsilon$,
\[
\left|f(t_{1})-f(t_{2})\right|\geq\epsilon\text{ or }
\left|f'(t_{1})-f'(t_{2})\right|\geq\epsilon.
\]
\end{definition}


In our definition, constant functions are excluded from both categories. There are 
several reasons for this. First, a constant function is $\Xi$-periodic for any $\Xi > 0$, so estimating the periodicity is ill-posed. Second, their sliding window image degenerates to a single point, lacking the loop structure required for our topological analysis.
Furthermore, the parameter $\epsilon$ inherently excludes a quasi-constant function whose variations are dominated by $\epsilon$, where it is indistinguishable from a constant function.

In addition, our definition excludes functions with partial recurrence. If a function repeats a pattern only within a certain sub-interval, the sliding window embedding overlaps in the same region but branches out in other regions. These branching geometry adds complexity to its topological structure, but we focus on purely periodic or purely non-periodic so that they clearly differ in their topological structures, which we will show in Section~\ref{sec:top_period}.

\begin{figure}[H]
    \centering
    \includegraphics[width=\linewidth]{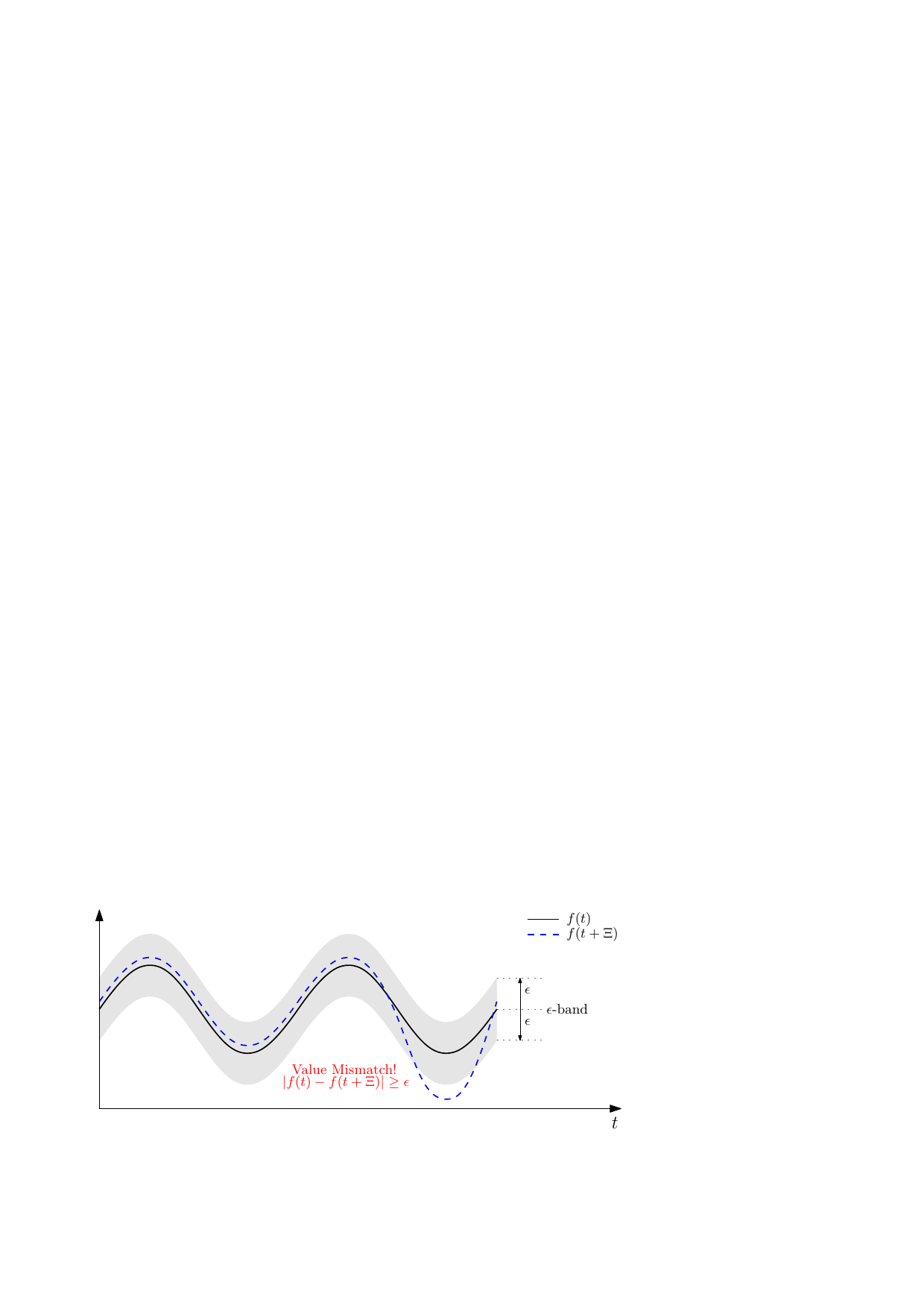} \includegraphics[width=\linewidth]{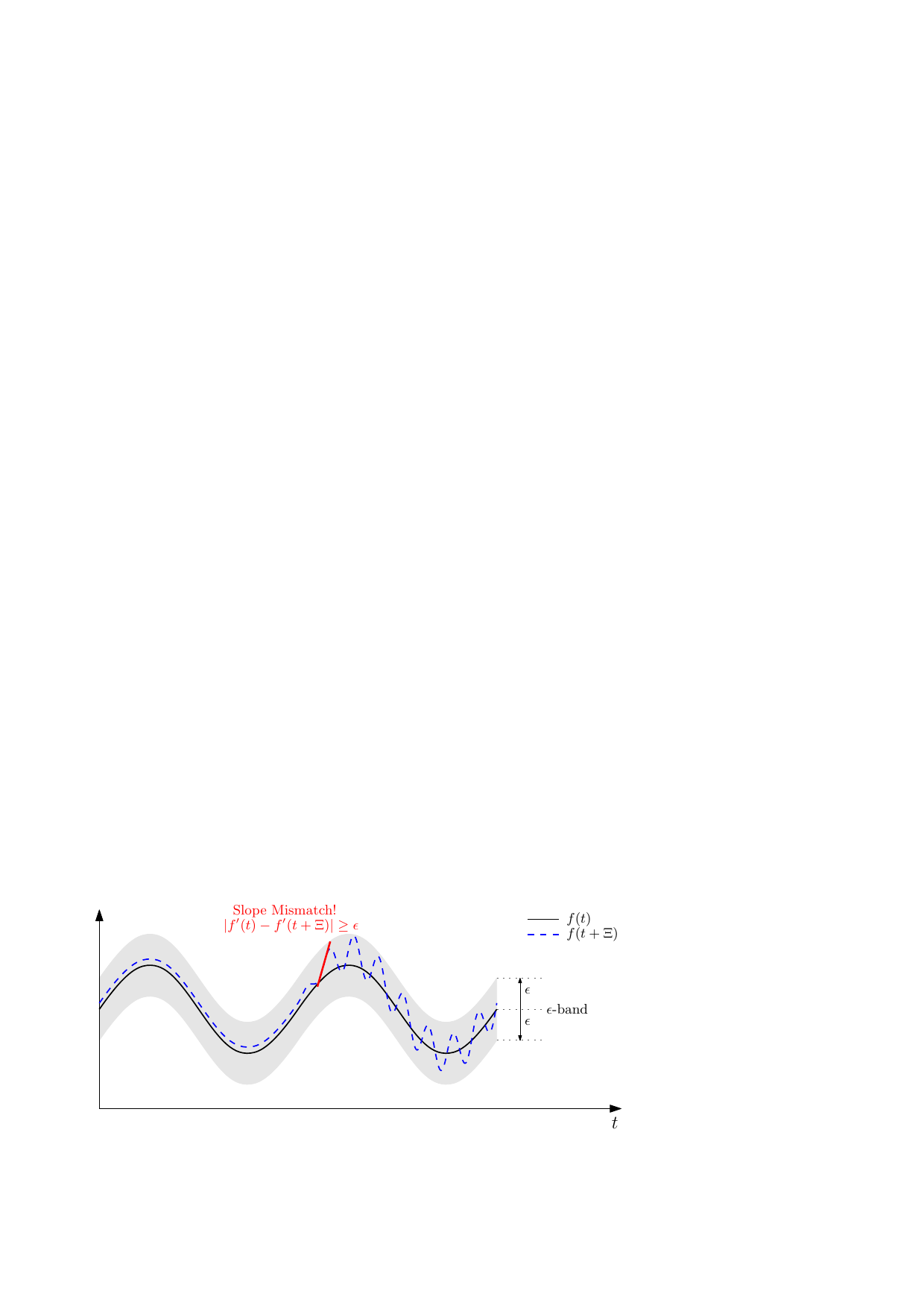}
    \caption{Example of $(\Xi,\epsilon)$-non-periodic function}
    \label{fig:nonperiodic}
\end{figure}
\begin{assumption}
\label{ass:periodic_either}
The true sampling function $f:\mathbb{R}\to\mathbb{R}$ is either $(\Xi,\epsilon)$-periodic for some $\Xi,\epsilon >0$, or $\epsilon$-non-periodic for some $\epsilon >0$.

\end{assumption}

We also have the regularity condition tailored for periodicity detection.
\begin{assumption}
\label{ass:fp_fpp_nonzero}
The true sampling function $f:\mathbb{R}\to\mathbb{R}$ is $C^{2}$, and there exists some $\delta>0$
such that for all $t\in \mathbb{R}$, either $\left|f'(t)\right|\geq\delta$ or $\left|f''(t)\right|\geq\delta$, and for all $t\in \mathbb{R}$, $\left|f''(t)\right|\leq L_{2}$.

Suppose further that $f$ satisfies the following condition: for all $t_{1},t_{2}$ with $|t_1-t_2|\leq K \epsilon$, 
$|f(t_1)-f(t_2)|\ge \delta | t_1-t_2|$ or $|f'(t_1)-f'(t_2)|\ge \delta |t_1-t_2|$ holds.

\end{assumption}


Similarly to our exclusion of constant functions, this assumption ensures non-degeneracy. It requires the function to deviate significantly from a constant function in both values and derivatives, preventing the trajectory from locally collapsing to a static point.

\section{Topology of Periodicity and non-Periodicity}
\label{sec:top_period}

In this section, we will show that, under regular conditions we introduced before, a non-periodic function is topologically trivial and a periodic function is topologically a circle. We will also use this to test whether the sampling function is periodic or not.


To be used for testing periodicity, we need not only the trajectory of time-delay embedding to be either contractible or circle, but we need its tubular neighborhood to be contractible or circle as well. For a set $A$ and $t>0$, let $A^{t}:=\{x:d(x,A)<t\}$ be the $t$-tubular neighborhood of $A$, where $d(x,A)=\max\{\|x-y\|_{2}:y\in A\}$. A set $A$ being homotopic equivalent to its tubular neighborhoods $A^{t}$, $t < t_{0}$, ensures that persistent homology of {\v C}ech complex, or distance function filtration, is empty when birth is strictly between $0$ and $t_{0}$.

We first delve into the non-periodic case. Under suitable regular conditions and when $\tau$ is small enough, we have $t$-tubular neighborhood of the trajectory that is homotopic to an interval, i.e. contractible.

\begin{theorem}

\label{thm:nonperiodic_contractible}

Suppose $f\in C^{2}$ satisfies Assumption~\ref{ass:fp_fpp_nonzero} and $\epsilon$-non-periodic. Then there exists some constant $C_{\delta,L}$ depending only on $\delta$ and $L$ such that as $\tau \to 0$, $SW_{m,\tau}f$ satisfies
that for any 
\[
0<t<\frac{\sqrt{m+1}}{2}\min\left\{ \epsilon,C_{\delta,L}\right\}
\]
we have that $(SW_{m,\tau}f(\mathbb{T}))^{t}$ is homotopic to interval,
and in particular contractible.

\end{theorem}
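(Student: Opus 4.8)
The plan is to show that the compact set $A:=SW_{m,\tau}f(\mathbb{T})$ is an embedded arc (homeomorphic to an interval) and that its offset $A^{t}$ deformation retracts onto $A$, so that $A^{t}\simeq A\simeq[0,1]$ is contractible. The cleanest route is through the critical point theory of the distance function $d_{A}$: if $d_{A}$ has no critical value in $(0,t]$ — equivalently, if the weak feature size of $A$ exceeds $t$ — then the offsets $A^{s}$, $0<s\le t$, are all isotopic and $A^{t}$ retracts onto $A$. Recall that $x$ is a critical point of $d_{A}$ exactly when $x\in\mathrm{conv}(\Gamma_{A}(x))$, where $\Gamma_{A}(x)$ is the set of nearest points of $A$ to $x$; generically this is a \emph{bottleneck}, a pair $p=\gamma(s_{1}),q=\gamma(s_{2})$ with $\|p-q\|=2d_{A}(x)$ whose connecting segment is normal to $A$ at both ends. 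I would therefore reduce the theorem to a lower bound on the width of every bottleneck. First I would record that $A$ is an embedded arc: immersivity (hence local injectivity) holds since wherever $f'(s)=0$ Assumption~\ref{ass:fp_fpp_nonzero} gives $|f''(s)|\ge\delta$, so $f'(s+\tau)\approx\tau f''(s)\ne0$ and $\gamma'(s):=\tfrac{d}{ds}SW_{m,\tau}f(s)\ne0$ for small $\tau$; global injectivity for parameter pairs beyond the non-periodicity threshold follows from $\epsilon$-non-periodicity, since the zeroth or first coordinate of $\gamma$ then separates the two images.

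The two terms in the bound correspond to the two mechanisms that can create a bottleneck, both read off from the expansion $\|\gamma(s_{1})-\gamma(s_{2})\|^{2}=\sum_{k=0}^{m}\bigl(f(s_{1}+k\tau)-f(s_{2}+k\tau)\bigr)^{2}$, whose leading term as $\tau\to0$ is $(m+1)\,|f(s_{1})-f(s_{2})|^{2}$; this is the source of the $\sqrt{m+1}$ scaling. For a \emph{global} bottleneck, where $s_{1},s_{2}$ are separated by at least the non-periodicity threshold, $\epsilon$-non-periodicity forces $|f(s_{1})-f(s_{2})|\ge\epsilon$ or $|f'(s_{1})-f'(s_{2})|\ge\epsilon$; in the first case the expansion gives $\|\gamma(s_{1})-\gamma(s_{2})\|\ge\sqrt{m+1}\,(\epsilon-o(\tau))$, so the bottleneck half-width is at least $\tfrac{\sqrt{m+1}}{2}(\epsilon-o(\tau))$. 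For a \emph{local} bottleneck, where $s_{1},s_{2}$ are close, the governing quantity is the radius of curvature of $\gamma$; using $\|\gamma'(s)\|^{2}=\sum_{k}f'(s+k\tau)^{2}\to(m+1)f'(s)^{2}$ and $\|\gamma''(s)\|\le\sqrt{m+1}\,L_{2}$, wherever $|f'(s)|\ge\delta$ the radius of curvature is bounded below by $\dfrac{\|\gamma'(s)\|^{2}}{\|\gamma''(s)\|}\ge\sqrt{m+1}\,\dfrac{(\delta-o(\tau))^{2}}{L_{2}}$, which yields the second term. Taking the minimum of the two gives the stated range of $t$.

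The hard part is the regime $|f'(s)|<\delta$, where Assumption~\ref{ass:fp_fpp_nonzero} only guarantees $|f''(s)|\ge\delta$; there $\|\gamma'(s)\|=O(\tau)$ and the radius of curvature collapses to $O(\tau^{2})$, so no curvature lower bound survives and the naive reach argument fails. The observation I would exploit is that near such a turning point $s_{0}$ the curve is, to leading order, a \emph{convex parabolic arc}: the expansion $\gamma(s_{0}+u)-\gamma(s_{0})=\tfrac{c}{2}u^{2}\mathbf{1}+c\tau u\,(0,1,\dots,m)+o(\cdot)$ with $c=f''(s_{0})$ shows that in the relevant plane it traces $X\propto Y^{2}$, whose two arms \emph{diverge}. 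A direct computation of $\Gamma_{A}$ for such an arc shows that its concave side carries no $x$ with $x\in\mathrm{conv}(\Gamma_{A}(x))$, i.e. the sharp turn produces \emph{no} critical value of $d_{A}$ despite its vanishing radius of curvature, so these regions are harmless for the homotopy type. By the same token I must verify that the low-distance near-approaches arising in the derivative case of $\epsilon$-non-periodicity (far-apart parameters with $f(s_{1})=f(s_{2})$ but $f'(s_{1})\ne f'(s_{2})$) are transversal crossings rather than bottlenecks: the connecting segment points along $(0,1,\dots,m)$, which is not orthogonal to $\gamma'(s_{i})$ unless $f'(s_{i})=O(\tau)$, so these too fail to be critical points of $d_{A}$. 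The remaining and main technical obstacle is to make the $o(\tau)$ remainders uniform over all parameter pairs and across the transition between the monotone and turning regimes, and to check that no essential critical value sneaks in at the boundary $|f'(s)|\approx\delta$, where both the curvature and the bottleneck estimates are weakest; once this uniformity is secured, the absence of critical values in $(0,t]$ yields the deformation retraction $A^{t}\searrow A$ and hence contractibility.
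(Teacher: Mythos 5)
Your global strategy (show the trajectory is an embedded arc, then show its $t$-offsets retract onto it) matches the paper's, and your treatment of the far-apart, function-value case is essentially the paper's. But there is a genuine gap, and it originates in your leading-order asymptotics. You expand $\|\gamma(s_1)-\gamma(s_2)\|^2=\sum_k(f(s_1+k\tau)-f(s_2+k\tau))^2\approx(m+1)|f(s_1)-f(s_2)|^2$ and $\|\gamma'(s)\|^2\approx(m+1)f'(s)^2$; these limits hold only when $m$ is fixed and $\tau\to0$, i.e.\ when the window length $m\tau$ shrinks to zero and the embedding degenerates onto the diagonal line. The regime the theorem lives in (consistent with $\tau_N=\frac{2\pi}{L(2N+1)}$, $M=2N$ elsewhere in the paper) keeps the window length $m\tau$ bounded below, and there the correct leading term of $\frac{1}{m+1}\sum_k h(s+k\tau)^2$ is the \emph{average} of $h^2$ over the window, not $h(s)^2$. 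The paper's first Claim exploits exactly this: partitioning the window into bins of width $O(\tau)$ and using that in each bin either $|h|\geq\epsilon$ or $|h'|\geq\epsilon$, it obtains $\|SW_{m,\tau}h(t)\|_2\geq\sqrt{m+1}(\epsilon-C_\tau\tau)$ uniformly in $t$. Applied to $h=f'$ this gives $\|(SW_{m,\tau}f)'(s)\|_2\geq\sqrt{m+1}(\delta-C_\tau\tau)$ \emph{everywhere, including at turning points where $f'(s)=0$}, because $|f''|\geq\delta$ there forces $f'$ to be large on most of the fixed-length window; applied to $h(t)=f(s_1+t)-f(s_2+t)$ it covers the derivative case of $\epsilon$-non-periodicity. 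Hence the curvature of the arc-length parametrization is uniformly bounded by $2L_2/\big(\sqrt{m+1}(\delta-C_\tau\tau)^2\big)$, far-apart points are uniformly separated, and the curvature-plus-separation criterion for the reach yields the stated lower bound, after which the deformation retraction of the offset is immediate.

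Your two flagged ``hard parts'' --- the turning points where you believe the radius of curvature collapses to $O(\tau^2)$, and the derivative-case near-approaches that you hope to dismiss as non-critical transversal crossings --- are therefore artifacts of the wrong asymptotic regime; in the intended regime neither occurs. As written, your proposal leaves precisely those two steps as unexecuted sketches (a critical-point analysis of approximate parabolic arcs with uniform control of all remainders), it is unclear that they can be completed, and it is unclear where the second term $(\delta-o(\tau))^2/L_2$ of the bound would come from if the curvature genuinely blew up at folds. The missing idea is the uniform lower bound on the sliding-window norm of $f'$ (and of parameter-shifted differences of $f$) over a window of fixed length; with it, the distance-function critical point theory you invoke is unnecessary and the elementary reach/offset argument suffices.
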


This immediately implies that the $1$-dimensional persistent homology of the trajectory is empty on the vertical strip $[0,t_{0}]\times \mathbb{R}$, for some $t_{0}>0$.\\
For convenience, we shall write the meaningful radius of tubular neighborhood as \[
a=\frac{\sqrt{m+1}}{2}\min\left\{ \epsilon,C_{\delta,L} \right\}.
\]

\begin{corollary}

\label{cor:non-periodic_homology}

Suppose $f\in C^{2}$ satisfies Assumption~\ref{ass:fp_fpp_nonzero}
 and $\epsilon$-non-periodic. Let $\mathcal{P}_{1}(SW_{m,\tau}f)$
be $1$-dimensional persistent homology of $SW_{m,\tau}f$ of {\v C}ech
complex filtration, understood as a subset of $\mathbb{R}^{2}$. Then
\[
\mathcal{P}_{1}(SW_{m,\tau}f)\cap\left[0,a \right)\times\mathbb{R}=\emptyset.
\]

\end{corollary}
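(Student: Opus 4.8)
The plan is to deduce the corollary directly from Theorem~\ref{thm:nonperiodic_contractible} by translating the statement about tubular neighborhoods into a statement about the {\v C}ech persistence module in degree $1$. Write $A = SW_{m,\tau}f(\mathbb{T})$ and set $t_{0} = \frac{\sqrt{m+1}}{2}\min\{\epsilon - C_{\tau}\tau,\ (\delta - C_{\tau}\tau)^{2}/L_{2}\}$, identifying the $o(\tau)$ term of the theorem with a term of the form $C_{\tau}\tau$. The first step is to invoke the persistent nerve lemma (Chazal--Oudot): for Euclidean balls it yields a filtered homotopy equivalence between the {\v C}ech complex filtration of $A$ at scale $r$ and the union-of-balls filtration, which for a set is exactly the open tubular neighborhood $A^{r}$. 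Because this comparison is natural in $r$ (it commutes with the inclusions $A^{r}\hookrightarrow A^{r'}$), the induced degree-$1$ persistence modules are isomorphic, so $\mathcal{P}_{1}(SW_{m,\tau}f)$ coincides with the $1$-dimensional persistence diagram of the filtration $\{A^{r}\}_{r\ge 0}$.

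With this identification, the second step is a purely algebraic statement about when classes can be born. Theorem~\ref{thm:nonperiodic_contractible} asserts that $A^{t}$ is homotopy equivalent to an interval, hence contractible, for every $0 < t < t_{0}$, so $H_{1}(A^{r}) = 0$ for all $r \in (0, t_{0})$. Suppose toward a contradiction that $(b,d) \in \mathcal{P}_{1}(SW_{m,\tau}f)$ with $0 \le b < t_{0}$; since $b < d$ and $b < t_{0}$, the interval $(b, \min(d, t_{0}))$ is nonempty, so we may choose $r$ in it satisfying $0 < r < t_{0}$ and $b < r < d$. The interval-decomposition of the degree-$1$ persistence module then makes the corresponding bar alive at $r$, forcing $\dim H_{1}(A^{r}) \ge 1$, which contradicts the vanishing above. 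Hence no point of $\mathcal{P}_{1}$ has birth in $[0, t_{0})$, which is exactly $\mathcal{P}_{1}(SW_{m,\tau}f)\cap [0, t_{0})\times\mathbb{R} = \emptyset$.

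The main obstacle I anticipate is reconciling the scale conventions so that the threshold in the corollary matches the radius threshold of Theorem~\ref{thm:nonperiodic_contractible}. Concretely, I must confirm which convention of the {\v C}ech filtration (ball radius versus simplex diameter) is in force, since a diameter convention introduces a factor of two that would have to be absorbed into $t_{0}$; the fact that the stated threshold reproduces the theorem's bound verbatim suggests the radius convention, but this should be checked rather than assumed. A secondary technical point is the functoriality demanded by the persistent nerve lemma: the cover by open balls must have contractible (or empty) finite intersections, which is automatic for convex balls in $\mathbb{R}^{2N+1}$, and the chosen homotopy equivalences must be compatible across scales; for the continuous curve $A$ one should either work with the good-cover version of the nerve theorem for paracompact spaces or pass to a sufficiently fine finite sample whose union of balls approximates $A^{r}$ on the relevant range. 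Finally, I would state explicitly the identification of the theorem's $o(\tau)$ with the corollary's $C_{\tau}\tau$ so that the right endpoint of the half-open interval is precisely $\frac{\sqrt{m+1}}{2}\min\{(\epsilon - C_{\tau}\tau),\ (\delta - C_{\tau}\tau)^{2}/L_{2}\}$.
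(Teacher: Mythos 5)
Your proposal is correct and follows essentially the same route as the paper, which treats the corollary as immediate from Theorem~\ref{thm:nonperiodic_contractible} via the observation (stated just before that theorem) that homotopy equivalence of $A$ with its tubular neighborhoods $A^{t}$ for $t<t_{0}$ forces the {\v C}ech (distance-function) persistence in degree $1$ to be empty for births in $[0,t_{0})$. Your extra care about the nerve-lemma identification, the interval decomposition, and the radius-versus-diameter convention only makes explicit what the paper leaves implicit, and the radius convention you suspect is indeed the one in force.
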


Second, we look at the periodic case. Again under a suitable regular condition, and when $\tau$ is small enough, then we have $t$-tubular neighborhood of the trajectory homotopic to circle. 

\begin{theorem}

\label{thm:periodic_circle}

Let $\Xi>0$, and Suppose $f\in C^{2}$ satisfies Assumption~\ref{ass:fp_fpp_nonzero}
 and $(\Xi,\epsilon)$-periodic. Then $SW_{m,\tau}f$ satisfies that
for any 
\[
0<t<\frac{\sqrt{m+1}}{2}\min\left\{ \epsilon,C_{\delta,L}\right\}
\]
we have that $(SW_{m,\tau}f(\mathbb{T}))^{t}$ is homotopic to a circle
$S^{1}$.

\end{theorem}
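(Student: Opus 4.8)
The plan is to reduce the claim to a reach estimate for the embedded trajectory and then read off the homotopy type from the topology of the base curve, mirroring the non-periodic argument of Theorem~\ref{thm:nonperiodic_contractible}. Specifically, I would show that for $\tau$ small the restriction of $SW_{m,\tau}f$ to a single period is a $C^{1,1}$ embedding of $S^1$ whose reach is at least
\[
r_{m,\tau}=\frac{\sqrt{m+1}}{2}\min\left\{\epsilon-o(\tau),\,\frac{(\delta-o(\tau))^2}{L_2}\right\},
\]
and then invoke the standard consequence of positive reach: for $t<\mathrm{reach}(A)$ the nearest-point projection deformation retracts the tubular neighborhood $A^t$ onto $A$, so $A^t\simeq A$. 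Because $(\Xi,\epsilon)$-periodicity makes the base curve a circle rather than an arc, this gives $(SW_{m,\tau}f(\mathbb{T}))^t\simeq S^1$ for every $t<r_{m,\tau}$, which is exactly the assertion. The entire local differential-geometric part of the estimate is insensitive to periodicity---it uses only the pointwise Assumption~\ref{ass:fp_fpp_nonzero}---so it is identical to the computation already carried out for Theorem~\ref{thm:nonperiodic_contractible}; the new content is the global analysis.

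Since $f$ is $\Xi$-periodic I take the domain to be $\mathbb{T}=\mathbb{R}/\Xi\mathbb{Z}\cong S^1$, on which $SW_{m,\tau}f$ is well defined and closes into a loop. Regularity makes it an immersion: the velocity has coordinates $(f'(t+k\tau))_{k=0}^{m}$, and in the regime $\tau\to0$ with $m\tau$ fixed the sum $\sum_k f'(t+k\tau)^2$ is a Riemann sum for a window average of $f'^2$, which the dichotomy $\max\{|f'|,|f''|\}\ge\delta$ keeps bounded below by a positive multiple of $(m+1)(\delta-o(\tau))^2$. Injectivity is where the separation hypothesis enters. If $t_1\not\equiv t_2\pmod{\Xi}$ their circular distance is at least the threshold $\tfrac{\pi\epsilon}{2\sup_t\|f'(t)\|_2}$; crucially, the same circular distance separates every shifted pair $(t_1+k\tau,\,t_2+k\tau)$, so the separation condition applies simultaneously in all $m+1$ coordinates.

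From this I would extract the global Euclidean lower bound
\[
\|SW_{m,\tau}f(t_1)-SW_{m,\tau}f(t_2)\|\ \ge\ \sqrt{m+1}\,(\epsilon-o(\tau)).
\]
Writing $g(s)=f(t_1+s)-f(t_2+s)$, the condition says $|g(s)|\ge\epsilon$ or $|g'(s)|\ge\epsilon$ at each window offset; since $g$ moves with speed $\ge\epsilon$ wherever $|g|<\epsilon$, it cannot remain within $\pm\epsilon$ on more than a controlled fraction of the window, so the averaged square $\tfrac{1}{m+1}\sum_k g(k\tau)^2$ is bounded below by a positive multiple of $\epsilon^2$. This single estimate does double duty: it proves injectivity, hence that the loop is an embedded circle, and it supplies the $\tfrac{\sqrt{m+1}}{2}(\epsilon-o(\tau))$ term of the reach bound. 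The curvature term is then obtained exactly as in the non-periodic case through Federer's identity $\mathrm{reach}(A)=\inf_{p\ne q}\tfrac{\|p-q\|^2}{2\,\mathrm{dist}(q-p,\,T_pA)}$: for pairs with small parameter separation a second-order Taylor expansion bounds the normal displacement by $\tfrac12 s^2\sqrt{m+1}\,L_2$ and the tangential displacement below by $s\sqrt{m+1}(\delta-o(\tau))$, yielding $\tfrac{\sqrt{m+1}}{2}\tfrac{(\delta-o(\tau))^2}{L_2}$; for pairs above the threshold the denominator is bounded by $\|p-q\|$ and the numerator by the separation just proved. The minimum of the two regimes is $r_{m,\tau}$, and the deformation retract finishes the argument.

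The hard part will be the global step, in two respects. First, promoting the one-parameter separation of the anchor values $(f,f')$ to a genuine Euclidean separation scaling as $\sqrt{m+1}\,\epsilon$ requires the quantitative ``$g$ cannot linger near zero'' argument above, with the loss controlled uniformly as $o(\tau)$; a naive bound from the single coordinate $k=0$ only gives a constant, not the $\sqrt{m+1}$ scaling. Second, and most delicate, is calibrating the threshold $\tfrac{\pi\epsilon}{2\sup_t\|f'\|_2}$ so that the local (curvature) and global (separation) regimes overlap and together cover every pair $p\ne q$ with no gap, since any mismatch would leave a band of parameter separations for which neither estimate controls the reach. The immersion lower bound, which must survive parameter values where $f'$ vanishes but $|f''|\ge\delta$, is the remaining point where the window-averaging has to be handled with care rather than pointwise.
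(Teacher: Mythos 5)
Your proposal follows essentially the same route as the paper: a quantitative separation bound obtained by applying the windowed lower-bound argument to $g(s)=f(t_1+s)-f(t_2+s)$, a curvature bound on the arc-length parametrization of order $2L_2/(\sqrt{m+1}(\delta-o(\tau))^2)$, a two-regime reach lower bound combining these (the paper packages this as the criterion of Aamari et al.\ rather than Federer's identity directly, including the wrap-around condition $S_{\max}-|s_1-s_2|\ge\pi R$ that your circular-distance formulation handles implicitly), and finally the nearest-point deformation retract of the tubular neighborhood onto the closed embedded loop. The difficulties you flag (the $\sqrt{m+1}$-scaling of the separation via the ``cannot linger near zero'' counting, and matching the local and global regimes) are exactly the ones the paper's chain of Claims resolves, so the proposal is correct and not materially different.
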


This immediately implies that the $1$-dimensional persistent homology of the trajectory has one point on the vertical strip $[0,t_{0}]\times \mathbb{R}$, for some $t_{0}>0$.

\begin{corollary}

\label{cor:periodic_homology}

Let $\Xi>0$, and suppose $f\in C^{2}$ satisfies Assumption~\ref{ass:fp_fpp_nonzero}
 and $(\Xi,\epsilon)$-periodic. Let $\mathcal{P}_{1}(SW_{m,\tau}f)$
be $1$-dimensional persistent homology of $SW_{m,\tau}f$ of {\v C}ech
complex filtration, understood as a subset of $\mathbb{R}^{2}$. Then
\[
\mathcal{P}_{1}(SW_{m,\tau}f)\cap\left[0, a\right)\times\mathbb{R}=\left\{ (0,d)\right\} ,
\]
where 
\[
d\geq a=\frac{\sqrt{m+1}}{2}\min\left\{ \epsilon,C_{\delta,L} \right\}  .
\]

\end{corollary}

\section{Subsampling Confidence Bound}
\label{sec:SCBforSlidingWindow}

\subsection{Confidence bound }\label{sec:subsample_Confidence_bound}

From the sampled points $T \subset \mathbb{T}$, we obtain the sliding window image $X_{m,\tau} = \operatorname{SW}_{m,\tau} f(T)$. Let $M_{m,\tau} = \operatorname{SW}_{m,\tau} f(\mathbb{T})$ denote the entire support of the sliding window image.

We develop a method to generate confidence bounds for persistence diagrams using the sliding window technique. Our objective is to estimate the true persistence diagram of the support $M_{m,\tau}$. Following~\cite{Fasy_2014}, we employ a subsampling method. We first introduce the algorithm and then establish its theoretical guarantees.

From $X_{m,\tau}$, we subsample $b$ points and get $X_{m,\tau,b}$. Define following function which calculates the Hausdorff distance between the sample and subsample for $B$ times as in~\cite{Fasy_2014}.
\[
\bar{L}_{m,\tau}(t) = \frac{1}{B} \sum_{j=1}^{B} I(d_H(X_{m,\tau},X_{m,\tau,b}^{(j)})>t)
\]

Then our algorithm is as following.

\begin{algorithm}
\caption{Implementation of Confidence bound}\label{alg:cap}
\textbf{Input:} $f, T, n,b,m, \alpha$

1. Compute the sliding window embedding of sample as $X_{m,\tau}=SW_{m,\tau}f(T)$.

2. Compute Monte Carlo subsamples $X_{m,\tau,b}^{(j)}$, for $j=1,\ldots,B$.

3. Compute upper $\alpha$ quantile of Hausdorff distances $\bar{L}_{m,\tau}^{-1}(\alpha)$.

\textbf{Output:} $c_\alpha=2\bar{L}_{m,\tau}^{-1}(\alpha)$
\end{algorithm}

Our next theorem establishes that we can construct a confidence bound for the homological features of the support using the sampled points. Define following function which in this case, calculates over all possible size $b$-subsamples.
\[L_{m,\tau}(t) = \frac{1}{\binom{n}{b}} \sum_{j=1}^{\binom{n}{b}} I\bigg(d_H(X_{m,\tau}, X_{m,\tau,b}^{(j)} )> t \bigg)\]
Here, $X_{m,\tau,b}^{(j)}$ for $j=1, \dots, \binom{n}{b}$ denote the subsamples of size $b$ drawn from the set $X_{m,\tau}$.

\begin{theorem}
\label{thm:nonStandconfidence}
    Suppose $f \in C^2$ satisfies Assumption~\ref{ass:fp_fpp_nonzero} and $\epsilon$-non periodic or $(\Xi, \epsilon)$-periodic for some $\Xi, \epsilon$. Let $b = o(n/\log{n})$ be a sequence tending to infinity as $n \to \infty$, and define $c_{\alpha} = 2(L_{m,\tau})^{-1} (\alpha)$. Then,
    \[
        P\bigg(d_B(\mathcal{P}(X_{m,\tau}), \mathcal{P}(M_{m,\tau})) > c_{\alpha} \bigg)\le \alpha + O\bigg(\frac{b}{n}\bigg)^{1/4}.
    \]
\end{theorem}



To construct the confidence interval, we use subsampling for two reasons. First, since we construct the Vietoris-Rips complex to derive the persistent homology of the dataset, 
other common confidence bound methods such as bootstrap do not have theoretical guarantee on the Vietoris-Rips persistent homology.
Furthermore, according to~\cite{Fasy_2014}, the subsampling method showed the best result among discrete resampling methods.

\subsection{Behavior of confidence bound when $m$ large}\label{sec:Confidence_bound_Infinite_Behavior}

In this section, we analyze the feature of confidence bound constructed in Section~\ref{sec:subsample_Confidence_bound}. We prove that these confidence bounds behave irrelevant with the embedding dimension $m$ when $f$ is $(\Xi,\epsilon)$-periodic and $m$ is large.

In Section~\ref{sec:subsample_Confidence_bound}, we defined $X_{m,\tau}$ and $M_{m,\tau}$. As the embedding dimension grows, both scales asymptotically $\sqrt{m+1}$ scale. Thus we first delate points by this scale factor.
\begin{align*}
    \tilde{X}_{m, \tau} &= \frac{1}{\sqrt{m+1}}X_{m,\tau} \\
    \tilde{M}_{m, \tau} &= \frac{1}{\sqrt{m+1}}M_{m,\tau} \\
    \tilde{c}_{\alpha} &= \frac{1}{\sqrt{m+1}}c_{\alpha}
\end{align*}
Now, assume $m$ is an even integer and $\tau_m = \Xi/(m+1)$, $\mathbb{T} = [0,\Xi]$. Also, assume that $\tilde{M}_{m,\tau_m}$ satisfies assumptions of Theorem~\ref{thm:confidenceBound} so it is able to construct confidence bound for each embedding dimensions. Under these conditions, we prove that $\tilde{c}_{\alpha}$ satisfies some consistency when the embedding dimension $m$ tends to infinity. To illustrate $\tilde{c}_{\alpha}$ depends on embedding dimension, we shall denote it by $\tilde{c}_{\alpha}^m$.

\begin{theorem}
\label{thm:calphaConvergence}
Suppose $f$ is $(\Xi, \epsilon)$ periodic function. For fixed $n,b$, $\tilde{c}_{\alpha}^{m}$ converges as $m\rightarrow \infty$.
\end{theorem}

Thus, this theorem shows us if the embedding dimension is relevantly large enough, then the choice of embedding dimension do not effect significantly to the confidence bound.

Moreover, the limit $\tilde{c}_{\alpha} = \lim_{m\rightarrow \infty} \tilde{c}_{\alpha}^m$ provides confidence bound for the limit of the persistence diagrams, which is stated by following theorem.

\begin{theorem}
\label{thm:infiniteconfidence}
    Suppose $f$ is $(\Xi, \epsilon)$ periodic function. Then following statement holds.

    (a) The sequences of persistence diagrams $\mathcal{P}(\tilde{X}_{m,\tau_m})$ and $\mathcal{P}(\tilde{M}_{m,\tau_m})$ form Cauchy sequences with respect to the bottleneck distance. 

    (b) Denoting the limit of (a) by
    \[\lim_{m\rightarrow\infty} \mathcal{P}(\tilde{X}_{m,\tau_m}) = \mathcal{P}_{\infty}(\tilde{X}),\]
    \[\lim_{m\rightarrow\infty} \mathcal{P}(\tilde{M}_{m,\tau_m}) = \mathcal{P}_{\infty}(\tilde{M}).\]
    Then as $b=o(n/\log n)$ tending to infinity as $n\rightarrow\infty$, $\tilde{c}_{\alpha} = \lim_{m\rightarrow \infty} \tilde{c}_{\alpha}^m$ satisfies
    \[P\bigg(d_B(\mathcal{P}_{\infty}(\tilde{X}), \mathcal{P}_{\infty}(\tilde{M}))>\tilde{c}_{\alpha}\bigg)\le \alpha +O\bigg(\frac{b}{n}\bigg)^{1/4}\]
    if $f$ satisfies the condition (*) below.
\end{theorem}

(*) The function
\[g(t) = \sum_{j=1}^{\infty} (\hat{f}(j)^2+\hat{f}(-j)^2) \cos(2\pi jt/\Xi)\]
satisfies: $m_{Leb}(g^{-1}(y)) = 0$ for all $y \in \mathbb{R}$

Except for degenerate cases, trigonometric series are not constant on any set of positive measure. Consequently, condition (*) is satisfied by a broad class of functions.

\section{Statistical Testing of Periodicity}
\label{sec:testing}
Since $(\Xi, \epsilon)$-periodic set and non-periodic set are disjoint but not complement, we restrict the domain of the function to only periodic or non-periodic functions.
Then, we can construct a binary hypothesis test as following.
\begin{enumerate}
    \item $H_0:f\text{ is }(\Xi,\epsilon)\text{-non-periodic function}$.
    \item $H_1:f\text{ is }(\Xi,\epsilon)\text{-periodic function}$.
\end{enumerate}
Using the critical value $c_\alpha$ corresponding to the significance level $\alpha$ using the subsampling method described in Theorem~\ref{thm:nonStandconfidence}, we define a rejection region $R$ in the persistence diagram plane $\mathbb{R}^2$ as
\[R:=\left\{(b,d)\in \mathcal{P}(X_m)\mid (b<c_\alpha)\text{ and }(d-b\ge 2c_\alpha)\right\}.\]


Suppose we ideally observe entire signal $f$, and consider its $1$-dimensional persistence diagram. If $f$ is $(\Xi,\epsilon)$-periodic, then the persistence diagram should contain a significant homological features in the strip containing y-axis, due to Theorem~\ref{thm:periodic_circle}. If $f$ is $\epsilon$-non-periodic, then the corresponding strip of its persistence diagram should be empty, due to Theorem~\ref{thm:nonperiodic_contractible}. 
When the sampling noise is considered, the strip may have noisy homological features near diagonals, but the region $R$ still shares the same behavior per each hypothesis, when the noise is controlled by $c_{\alpha}$. Hence, whether $R$ contains homological features or not can work to detect periodicity.

The following image is the persistence diagram of $X_{m,\tau}$ with real persistence point $(0,d)$ under $H_1$.

\begin{figure}[H]
    \centering
    \includegraphics[width=0.8\linewidth]{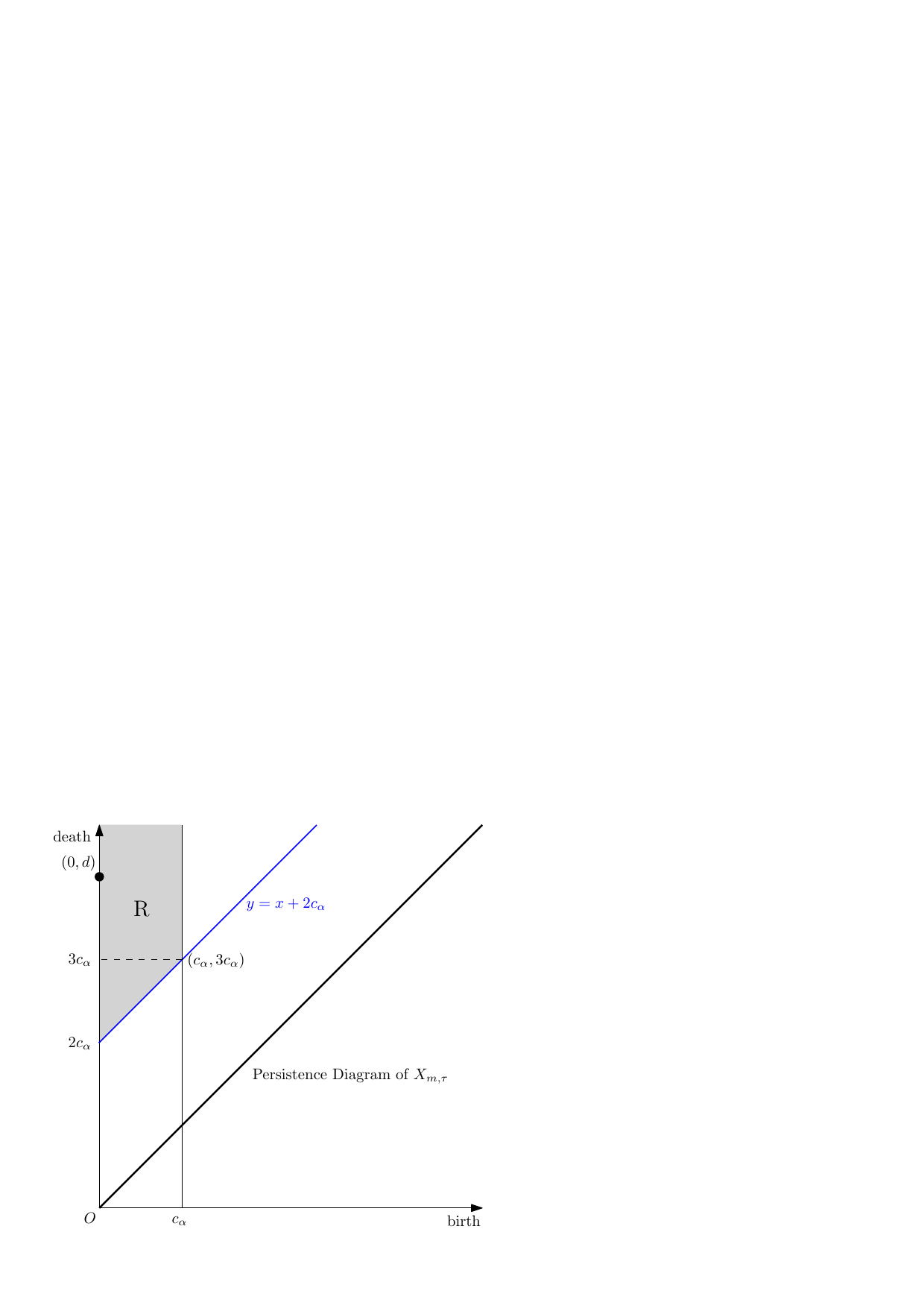}
    \caption{Diagram of Hypothesis Testing}
    \label{fig:placeholder}
\end{figure}

We define a decision function $\phi(X)$ as whether there is any element of persistence diagram $\mathcal{P}(X)$ in the region $R$.
\[\phi(X_m) = 
\begin{cases} 
0 & \text{if } |\mathcal{P}(X_m) \cap R| = 0 \quad (\text{Accept } H_0)\\
1 & \text{if } |\mathcal{P}(X_m) \cap R| \ge 1 \quad (\text{Reject } H_0)
\end{cases}\]
The following theorems state that our proposed hypothesis test is significant under condition of $a\ge 4c_\alpha$ in terms of type I and type II error. The $c_\alpha$ decreases as the sample size and subsample size increases, whereas $a$ does not depend on the sample size, the condition $a\ge 4c_\alpha$ holds for sufficiently large sample and subsample sizes.

\begin{theorem}
\label{thm:type1err}
For given $\alpha \in (0,1)$, the test has asymptotic significance level $\alpha$, i.e., 
\[
P(\phi(X_{m})=0)\leq\alpha.
\]
\end{theorem}

\begin{theorem}
\label{thm:type2err}
For given $\alpha \in (0,1)$, the test has type II error asymptotically bounded by $\alpha$, i.e., 
\[
P(\phi(X_{m})=1)\leq\alpha.
\]
\end{theorem}

\section{Simulation Study}
\subsection{Synthetic Data}
\label{sec:simul}
In this section, we conduct simulations for our methodology. 
We compare our method with the result derived from Generalized Lomb-Scargle periodogram (GLS,~\cite{Zechmeister2009}). The constant $m$, which determines the embedding dimension, is chosen as $m=10$.
First, we start with the case where our sampling functions are truly periodic. The two functions we use are
\begin{align*}
    f_1(x) &= \frac{3}{2 - \cos(5x)} \\
    f_2(x) &= 5\log(\sin(10x) + 5) + e^{\cos(5x)}.
\end{align*}
Four types of error are introduced with the scale parameter increasing from 0.05 to 0.3: (1) Additive Gaussian (AG) (2) Additive Laplacian (AL) (3) Multiplicative Gaussian (MG) (4) Multiplicative Laplacian (ML). (The details are explained in Appendix~\ref{sec:err_exp})
We suppose that the ideal delay required in section \ref{sec:Confidence_bound_Infinite_Behavior} ($\tau_m$) is given. Linear interpolation is done for the time-delay embedding as we suppose that we do not know the true distribution. 

Table ~\ref{tab:pd} shows the result of 1) time-delay embedding with subsampling (TDS) and 2) GLS. It is shown that the results are satisfactory when the error is not large.
Next, we show that our method successfully discriminates non-periodic functions with periodic functions. We test with functions that seem periodic but 1) have decreasing period (i.e. chirp functions) or 2) have a trend. In this setting, we take n = 300 samples from interval $[0, 2\pi]$ and take b = 250 subsamples. The Monte Carlo simulation for calculating the confidence bound is done for 1000 times. 
The two functions we use are
\begin{align*}
    f_3(x) &= \sin(10\sqrt{x}) \\
    f_4(x) &= \sin(2x) \max(1, |x - \pi|).
\end{align*}
The simulation results revealed (see Table \ref{tab:npd}) that the GLS method detected every sample as periodic, while our method classified most of the samples as non-periodic.


\begin{table}[!ht]
\centering
\caption{Comparison for periodic functions
(1) $f_1(x)=\dfrac{3}{2-\cos(5x)}$ \quad (2) $f_2(x)=5\log(\sin(10x)+5)+e^{\cos(5x)}$.
Each entry is the number of detections out of 100 simulations.}
\label{tab:pd}
\begin{tabular}{@{}l c cc cc@{}}
\hline
err\_type & scale & $f_1$ TDS & $f_1$ GLS & $f_2$ TDS & $f_2$ GLS \\
\hline
-- & 0.00 & 85  & 100 & 86  & 100 \\
AG & 0.05 & 98  & 100 & 90  & 100 \\
AL & 0.05 & 100 & 100 & 92  & 100 \\
MG & 0.05 & 100 & 100 & 0   & 100 \\
ML & 0.05 & 99  & 100 & 0   & 100 \\
AG & 0.10 & 100 & 100 & 96  & 100 \\
AL & 0.10 & 100 & 100 & 98  & 100 \\
MG & 0.10 & 15  & 100 & 0   & 100 \\
ML & 0.10 & 10  & 100 & 0   & 100 \\
AG & 0.15 & 94  & 100 & 97  & 100 \\
AL & 0.15 & 62  & 100 & 99  & 100 \\
MG & 0.15 & 0   & 100 & 0   & 100 \\
ML & 0.15 & 0   & 100 & 0   & 100 \\
AG & 0.20 & 3   & 100 & 82  & 100 \\
AL & 0.20 & 0   & 100 & 54  & 100 \\
MG & 0.20 & 0   & 100 & 0   & 100 \\
ML & 0.20 & 0   & 100 & 0   & 100 \\
AG & 0.25 & 0   & 100 & 5   & 100 \\
AL & 0.25 & 0   & 100 & 1   & 100 \\
MG & 0.25 & 0   & 100 & 0   & 99  \\
ML & 0.25 & 0   & 100 & 0   & 100 \\
AG & 0.30 & 0   & 100 & 0   & 100 \\
AL & 0.30 & 0   & 100 & 0   & 100 \\
MG & 0.30 & 0   & 100 & 0   & 93  \\
ML & 0.30 & 0   & 100 & 0   & 94  \\
\hline
\end{tabular}
\end{table}

\begin{table}[!ht]
\centering
\caption{Comparison for non-periodic functions
(3) $f_3(x)=\sin(10\sqrt{x})$ \quad (4) $f_4(x)=\sin(2x)\max(1,\lvert x-\pi\rvert)$.
Each entry is the number of detections out of 100 simulations.}
\label{tab:npd}
\begin{tabular}{@{}l c cc cc@{}}
\hline
err\_type & scale & $f_3$ TDS & $f_3$ GLS & $f_4$ TDS & $f_4$ GLS \\
\hline
-- & 0.00 & 0 & 100 & 11 & 100 \\
AG & 0.05 & 0 & 100 & 8  & 100 \\
AL & 0.05 & 0 & 100 & 5  & 100 \\
MG & 0.05 & 0 & 100 & 1  & 100 \\
ML & 0.05 & 0 & 100 & 3  & 100 \\
AG & 0.10 & 0 & 100 & 1  & 100 \\
AL & 0.10 & 0 & 100 & 0  & 100 \\
MG & 0.10 & 0 & 100 & 0  & 100 \\
ML & 0.10 & 0 & 100 & 0  & 100 \\
\hline
\end{tabular}
\end{table}




\textbf{Interpretation of Table \ref{tab:pd} and \ref{tab:npd}} At low error level, our method successfully detected the data sampled from periodic functions. However, as the error scale increased, it increasingly categorized data with multiplicative error as non-periodic. This aligns with our theoretical statements, where large multiplicative error significantly distort the amplitude of the sampling function. Meanwhile, for the case of additive error, our method is more sensitive since we do not have any assumptions for the error while GLS assumes a Gaussian error. Another reason for the conservative result will be demonstrated in the following paragraph.

From the simulation results, we can conclude that compared to GLS, our proposed method is more conservative in terms of classifying a time-series data to be derived from a periodic sampling function. Nevertheless, since the sampling function was chosen to have infinitely many non-zero Fourier coefficients, as well as the error terms, our method was shown to have lower power and higher vulnerability to increasing error.

\subsection{Real Data}
In this section, we use the BIDMC dataset~\cite{BIDMC2025}, which is a dataset with periodic signals, to check if our method is applicable to real data. Since each BIDMC set consists of 60000 samples, which is larger than enough to apply our method, we only take the first $n = 500$ samples and apply the test with $b = 450$. We arbitrarily choose 10 fixed values for the time delay. For each dataset, if at least one of the cases are calculated to be periodic, then we consider the time series to be periodic. Among the 53 datasets, 49 datasets were found to be periodic. The experience well explains the theoretical results in section \ref{sec:SCBforSlidingWindow} that sufficiently small $\tau$ is enough when deriving the confidence bound. It also reveals that our method does not require a data set with the periods to be repeated several times, considering that we used only the first 500 samples out of the whole dataset.

\section{Concluding Remarks}
In this paper, we examined the topological properties of the periodic and non-periodic functions when time-delay embedding is applied. We proved the existence of one dimensional persistent homology when the sampling functions are periodic, and verified the range for which the feature can exist in. Then, we used subsampling to provide a confidence bound for the significant features in the persistence diagram that is asymptotically valid. Based on the two results, we proposed a novel periodicity test for time series data that is statistically reasonable. Under conditions, we proved that the test controls both type I error and type II error. 
In simulation settings, we used linear interpolation to calculate the unknown values that are required for time-delay embedding. Thus, our method exhibited results comparable to those of the GLS method when the error scale was not large. Meanwhile, our method successfully distinguished non-periodic functions, such as chirp functions or functions with trend, in which the GLS method categorized as periodic. In real data, where we do not know the true period, our method successfully tested when we have a target period that we are curious about.

\textbf{Future Work}: There are several directions to extend the work. For theoretical approach, we may further weaken the assumptions to extend the function class we are examining. In the same logic, we expect to expand the theory about periodic functions to quasiperiodic or almost periodic functions. For practical implementation, we may find a better interpolation method and a better way to test the periodicity of real data.

\section*{Acknowledgments}
This research was supported by 2025 Student-Directed Education Regular Program from Seoul National University.

\section*{Impact Statement}
This paper presents work whose goal is to advance the field of machine learning. There are many potential societal consequences of our work, none of which we feel must be specifically highlighted here.

\bibliography{reference}
\bibliographystyle{icml2026}

\newpage
\appendix
\onecolumn

\section{Geometry of Time-delay Embedding}
\label{sec:geometry_convergence_time_delay}
The geometric structure of time-delayed embeddings for periodic functions was rigorously analyzed by \cite{Perea2015}. They considered periodic functions $f : \mathbb{T} = \mathbb{R}/2\pi\mathbb{Z} \rightarrow \mathbb{R}$. Using the notion of an $L$-periodic function, which is period of $2\pi/L$, they characterized the structure of the sliding window embedding. Specifically, for a window size $\tau_m = \frac{2\pi}{L(m+1)}$ with embedding dimension $m$ even, they proved that the sliding window embedding of trigonometric polynomials of degree at most $m/2$ forms a composition of circle orbits that are mutually orthogonal.

Moreover, \cite{Perea2015} introduced the pointwise centralize, normalizing operation. These maps are defined by $c(x)=x-\frac{x^t1}{1^t1}1$, $n(x)=\frac{x}{\sqrt{x^tx}}$. Sliding window embedding of finite samples $T \subset \mathbb{T} = \mathbb{R}/2\pi \mathbb{Z}$ to $\mathbb{R}^{m+1}$ with time delay $\tau_m = \frac{2\pi}{L(m+1)}$ and applying the centralizing normalizing process, \cite{Perea2015} proved the convergence results of those points $\bar{X}_m$.

\begin{theorem}[\cite{Perea2015} Theorem 6.6]
\label{thm:SlidingWindowPersistenceThm6.6}
    $f \in C^1(\mathbb{T})$ be an periodic function. The sequence of persistent diagrams $\mathcal{P}(\bar{X}_m)$ is Cauchy with respect to $d_B$ and
    \[\lim_{m\rightarrow \infty} \mathcal{P}(\bar{X}_m) = \mathcal{P}_{\infty} (f,T,\frac{2\pi}{L})\]
    in the sense of generalized diagrams.
\end{theorem}

\section{Statement on Section \ref{sec:Confidence_bound_Preliminary}}
\label{sec:confidence_subsampling_formal}

In Section \ref{sec:Confidence_bound_Preliminary} we addressed the results from \cite{Fasy_2014}. We describe the formal statement here. Assume the observed sample $\{X_1, \cdots, X_n\}$ from a distribution $P$ concentrated on the set $\mathbb{M}$. \cite{Fasy_2014} assumed following assumptions.

Assumption A1. $\mathbb{M}$ is a $d$-dimensional compact manifold without boundary, embedded in $\mathbb{R}^D$ and $\mathrm{reach}(\mathbb{M}) > 0$

Assumption A2. For each $x\in\mathbb{M}$, $\rho(x,t) = \frac{P(B(x,t/2))}{t^d}$ is bounded continuous function of $t$, differentiable for $t\in (0,t_0)$ and right differentiable at zero. $\rho(x,t)$ is bounded above from zero and infinity and there exists $t_0 > 0$ and some $C$ that
\[\sup_{x}\sup_{0\le t\le t_0} \bigg\vert \frac{\partial\rho(x,t)}{\partial t}\bigg\vert \le C < \infty.\]

Then, it is possible to construct the subsample bound from the Hausdorff distance between the subsample and the original sample. Define $L(t) = \frac{1}{\binom{n}{b}} \sum_{j=1}^{\binom{n}{b}} I(d_H(S_{b,n}^{j},S_n)>t)$, which is the summation of the indicator function over all possible $\binom{n}{b}$ subsamples of size $b$. As remarked in \cite{Fasy_2014}, in practice we shall use the Monte Carlo method to approximate this function. Denote the persistence diagram of $\{X_1, \cdots, X_n \}$ by $\hat{\mathcal{P}}$ and the persistence diagram of $\mathbb{M}$ by $\mathcal{P}$. For $b$ increasing to infinity as $n\rightarrow \infty$ and satisfying $b = o(\frac{n}{\log n})$,

\begin{theorem}[\cite{Fasy_2014} Theorem 3]
   For all large $n$,
   \[\mathbb{P}(d_B(\hat{\mathcal{P}},\mathcal{P}) > c_{\alpha}) \le \alpha + O\bigg(\frac{b}{n}\bigg)^{1/4}.\]
\end{theorem}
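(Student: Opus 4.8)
The plan is to reduce the bottleneck-distance statement to a Hausdorff-distance statement via stability, and then to control the Hausdorff distance by a subsampling argument. Since $\hat{\mathcal{P}}$ and $\mathcal{P}$ are the persistence diagrams of the distance-to-a-set functions $d_{S_n}$ and $d_{\mathbb{M}}$ (equivalently, of the {\v C}ech filtration), the stability theorem gives the deterministic bound $d_B(\hat{\mathcal{P}}, \mathcal{P}) \le \|d_{S_n} - d_{\mathbb{M}}\|_\infty = d_H(S_n, \mathbb{M})$. Hence it suffices to prove $\mathbb{P}(d_H(S_n, \mathbb{M}) > c_b) \le \alpha + O((b/n)^{1/4})$. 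Because $S_n \subset \mathbb{M}$ by construction, this Hausdorff distance equals the one-sided covering radius $\sup_{x\in\mathbb{M}} d(x, S_n)$, which is monotone decreasing as sample points are added.

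Next I would set up the subsampling comparison. Let $\theta_b(t) = \mathbb{P}(d_H(S_b, \mathbb{M}) > t)$ be the tail of the covering radius of an i.i.d. sample of size $b$ from $P$. The key observation is that a uniformly chosen size-$b$ subset $S_b^*$ of $S_n$ is marginally distributed as an i.i.d. size-$b$ sample, while satisfying $S_b^* \subseteq S_n$ and therefore $d_H(S_n, \mathbb{M}) \le d_H(S_b^*, \mathbb{M})$ pointwise. Averaging over the choice of subset yields the stochastic domination $\mathbb{P}(d_H(S_n, \mathbb{M}) > t) \le \theta_b(t)$. It then remains to relate the unknown $\theta_b$ to the computable $L_b$ and to control $\theta_b^{-1}(\alpha)$ through $L_b^{-1}(\alpha)$.

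The bridge between $\theta_b$ and $L_b$ rests on two ingredients. First, a triangle inequality $|d_H(S_{b,n}^{(j)}, \mathbb{M}) - d_H(S_{b,n}^{(j)}, S_n)| \le d_H(S_n, \mathbb{M})$, so on the high-probability event $\{d_H(S_n, \mathbb{M}) \le \eta\}$ the empirical counts defining $L_b(t)$ and the oracle counts $U_{b,n}(t) = \binom{n}{b}^{-1}\sum_j I(d_H(S_{b,n}^{(j)}, \mathbb{M}) > t)$ differ only by a shift of $t$ by $\eta$. Assumption A2 (density bounded below, $\rho$ smooth) controls this band, giving $d_H(S_n, \mathbb{M}) = O_P((\log n/n)^{1/d})$, negligible at the relevant scale, and guaranteeing that $\theta_b$ has no atoms so that quantile inversion is well-defined. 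Second, $U_{b,n}(t)$ is a U-statistic of order $b$ with bounded indicator kernel and mean exactly $\theta_b(t)$, so Hoeffding's inequality for U-statistics gives $|U_{b,n}(t) - \theta_b(t)| = O_P((b/n)^{1/2})$. Combining the band shift with this fluctuation and inverting at level $\alpha$ shows that $L_b^{-1}(\alpha)$ approximates $\theta_b^{-1}(\alpha)$, while the factor $2$ in $c_b = 2L_b^{-1}(\alpha)$ absorbs the residual triangle-inequality slack.

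The main obstacle I expect is the passage from the CDF-level concentration of order $(b/n)^{1/2}$ to the final coverage bound at rate $(b/n)^{1/4}$. Converting a uniform control of $|U_{b,n} - \theta_b|$ into a bound on the coverage probability requires an anti-concentration (modulus-of-continuity) estimate for $\theta_b$ near its $\alpha$-quantile, since a small CDF error can move the quantile by a larger amount when the local slope is small. Balancing the U-statistic fluctuation against this local smoothness — for instance via a Chebyshev step applied to the U-statistic variance — is precisely what degrades the exponent from $1/2$ to $1/4$, and making this trade-off rigorous with constants uniform in $n$ under Assumptions A1--A2 is the technically delicate heart of the argument.
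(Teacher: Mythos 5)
The paper does not prove this statement at all: it is imported verbatim as Theorem~3 of \cite{Fasy_2014} and used as a black box, with the appendix only recording the regularity assumptions (A1: $\mathbb{M}$ compact with positive reach; A2: the ratio $\rho(x,t)=P(B(x,t/2))/t^{d}$ bounded away from $0$ and $\infty$ with bounded $t$-derivative) under which the cited result applies. So there is no internal proof to compare against, and what you have written is a reconstruction of the argument in the reference itself. As such it is essentially faithful to that argument: the stability reduction $d_B(\hat{\mathcal{P}},\mathcal{P})\le \|d_{S_n}-d_{\mathbb{M}}\|_\infty=d_H(S_n,\mathbb{M})$, the observation that a size-$b$ subsample is marginally an i.i.d.\ $b$-sample so that $d_H(S_n,\mathbb{M})$ is stochastically dominated by $d_H(S_b,\mathbb{M})$, the sandwich $d_H(S_{b,n}^{(j)},\mathbb{M})-d_H(S_n,\mathbb{M})\le d_H(S_{b,n}^{(j)},S_n)\le d_H(S_{b,n}^{(j)},\mathbb{M})$ linking the computable $L_b$ to the oracle tail $\theta_b$, and the U-statistic concentration followed by quantile inversion are exactly the ingredients used there; you also correctly locate the source of the $(b/n)^{1/4}$ exponent in the trade-off between the $(b/n)^{1/2}$ fluctuation of the U-statistic and the local behaviour of $\theta_b$ near its $\alpha$-quantile. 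Two points would need care in a full write-up: first, the comparison of scales must be made explicit, since both $d_H(S_n,\mathbb{M})=O_P((\log n/n)^{1/d})$ (which requires the density lower bound in A2) and the $\alpha$-quantile of $\theta_b$ tend to zero, and the argument only works because the former is of strictly smaller order when $b=o(n/\log n)$; second, the factor $2$ in $c_b=2L_b^{-1}(\alpha)$ is not a cosmetic absorption of slack but is forced by the one-sided inequality $L_b(t)\le U_{b,n}(t)$, which makes the empirical quantile systematically too small relative to the oracle one, so the doubling is precisely what restores a valid upper bound and the quantile comparison should be carried out in that direction rather than waved at.
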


\section{Proofs for Section~\ref{sec:top_period}}

\begin{claim}
\label{claim:time_delay_norm_lower}
Suppose $f$ satisfies that for all $t\in\mathbb{T}$, $\left|f(t)\right|\geq\epsilon$
or $\left|f'(t)\right|\geq\epsilon$, and $\left|f'(t)\right|,\left|f''(t)\right|\leq L$.
Then if $m$ and $\tau$ satisfies that $m\tau\geq\frac{\epsilon}{2L}$
and $\tau\leq\frac{\epsilon}{8L}$, then 
\[
\left\Vert SW_{m,\tau}f(t)\right\Vert _{2}\geq\frac{\sqrt{m+1}\epsilon}{2}\min\left\{ 1,\frac{\epsilon}{32L}\right\}.
\]

\end{claim}

\begin{proof}

Note that 
\[
SW_{m,\tau}f(t)=\begin{bmatrix}f(t)\\
f(t+\tau)\\
\vdots\\
f(t+m\tau),
\end{bmatrix}
\]
and hence 
\[
\left\Vert SW_{m,\tau}f(t)\right\Vert _{2}^{2}=\sum_{j=0}^{m}(f(t+j\tau))^{2}.
\]
Now, let $t_{0}\coloneqq t+\frac{m}{2}\tau$, and we divide into
cases whether $\left|f(t_{0})\right|\geq\epsilon$ or $\left|f'(t_{0})\right|\geq\epsilon$.

We first assume that $m\tau\leq\frac{\epsilon}{L}$, so that $\frac{\epsilon}{4L}\leq\frac{m}{2}\tau\leq\frac{\epsilon}{2L}$.

Now, we establish the inequality for two cases: $\vert f(t_0)\vert \ge \epsilon$ and $\vert f'(t_0)\vert \ge \epsilon$.

First, assume $\left|f(t_{0})\right|\geq\epsilon$. Since $\left|f'(s)\right|\leq L$,
we have that for any $s$ with $\left|s-t_{0}\right|\leq\frac{\epsilon}{2L}$,
\[
\left|f(s)\right|\geq\left|f(t_{0})\right|-\left|f'(\xi)\right|\left|s-t_{0}\right|\geq\frac{\epsilon}{2}.
\]
Hence 
\[
\left\Vert SW_{m,\tau}f(t)\right\Vert _{2}^{2}=\sum_{j=0}^{m}(f(t+j\tau))^{2}\geq\frac{(m+1)\epsilon^{2}}{4}.
\]

Second, assume $\left|f'(t_{0})\right|\geq\epsilon$. Since $\left|f''(s)\right|\leq L$,
we have that for any $s$ with $\left|s-t_{0}\right|\leq\frac{\epsilon}{2L}$,
\[
\left|f'(s)-f'(t_{0})\right|\leq\left|f''(\xi')\right|\left|s-t_{0}\right|\leq\frac{\epsilon}{2}.
\]
This ensures $\left|f'(s)\right|\geq\frac{\epsilon}{2}$, and $f'(s)$
and $f'(t_{0})$ share the same sign. Now, 
\[
f(s)=f(t_{0})+f'(\xi)(s-t_{0}).
\]
Hence when $f(t_{0})f'(t_{0})\geq0$, then for any $s$ with $t_{0}+\frac{\epsilon}{8L}\leq s\leq t_{0}+\frac{\epsilon}{2L}$,
\begin{align*}
\left|f(s)\right| & =\left|f(t_{0})+f'(\xi)(s-t_{0})\right|\\
 & \geq\left|f'(\xi)(s-t_{0})\right|\\
 & \geq\frac{\epsilon^{2}}{16L}.
\end{align*}
Now, under the condition that $\frac{\epsilon}{4L}\leq\frac{m}{2}\tau\leq\frac{\epsilon}{2L}$
we have 
\[
\left[t_{0}-\frac{m}{2}\tau,t_{0}+\frac{m}{2}\tau\right]\cap\left[t_{0}+\frac{\epsilon}{8L},t_{0}+\frac{\epsilon}{2L}\right]\supset\left[t_{0}+\frac{\epsilon}{8L},t_{0}+\frac{\epsilon}{4L}\right],
\]
which implies that, under $\tau\leq\frac{\epsilon}{8L}$, 
\[
\left|\left\{ j\in[0,m]\cap\mathbb{Z}:t+j\tau\in\left[t_{0}+\frac{\epsilon}{8L},t_{0}+\frac{\epsilon}{2L}\right]\right\} \right|\geq\frac{m+1}{16}.
\]
Hence 
\[
\left\Vert SW_{m,\tau}f(t)\right\Vert _{2}^{2}=\sum_{j=0}^{m}(f(t+j\tau))^{2}\geq\frac{(m+1)\epsilon^{4}}{4096L^{2}}.
\]
And when $f(t_{0})f'(t_{0})\leq0$, then for any $s$ with $t_{0}-\frac{\epsilon}{2L}\leq s\leq t_{0}-\frac{\epsilon}{8L}$,
\begin{align*}
\left|f(s)\right| & =\left|f(t_{0})+f'(\xi)(s-t_{0})\right|\\
 & \geq\left|f'(\xi)(s-t_{0})\right|\\
 & \geq\frac{\epsilon^{2}}{16L}.
\end{align*}
This leads to the similar calculation as 
\[
\left\Vert SW_{m,\tau}f(t)\right\Vert _{2}^{2}=\sum_{j=0}^{m}(f(t+j\tau))^{2}\geq\frac{(m+1)\epsilon^{4}}{4096L^{2}}.
\]
Hence, we have the conclusion as 
\[
\left\Vert SW_{m,\tau}f(t)\right\Vert _{2}\geq\frac{\sqrt{m+1}\epsilon}{2}\min\left\{ 1,\frac{\epsilon}{32L}\right\}.
\]
Now, for the case $m\tau>\frac{\epsilon}{L}$, we can find appropriate
$t_{1},\ldots,t_{k}$ so that 
\[
\left\Vert SW_{m,\tau}f(t)\right\Vert _{2}^{2}=\sum_{j=0}^{m}(f(t+j\tau))^{2}=\sum_{i=1}^{k}\sum_{j=0}^{m_{i}}(f(t_{i}+j\tau))^{2},
\]
where each $m_{i}$ satisfy $\frac{\epsilon}{2L}\leq m_{i}\tau\leq\frac{\epsilon}{L}$
and $\sum(m_{i}+1)=m+1$. Hence we have that 
\[
\left\Vert SW_{m,\tau}f(t)\right\Vert _{2}^{2}\geq\sum_{i=1}^{k}\frac{(m_{i}+1)\epsilon^{2}}{4}\min\left\{ 1,\frac{\epsilon^{2}}{1024L^{2}}\right\} =\frac{(m+1)\epsilon^{2}}{4}\min\left\{ 1,\frac{\epsilon^{2}}{1024L^{2}}\right\}.
\]
hence we have the same bound as 
\[
\left\Vert SW_{m,\tau}f(t)\right\Vert _{2}\geq\frac{\sqrt{m+1}\epsilon}{2}\min\left\{ 1,\frac{\epsilon}{32L}\right\}.
\]

\end{proof}

\begin{claim}

Suppose $f$ satisfies $\left|f'(t)\right|\geq\delta$ or $\left|f''(t)\right|\geq\delta$ and $\left|f'(t)\right|,\left|f''(t)\right|\leq L$.
Then for $t_{1}$, $t_{2}$ and $m,\tau$ satisfying $m\tau\ge \frac{\delta}{8L}\vert t_2-t_1\vert$, $\tau \le \frac{\epsilon}{16L}$,
\[
\left\Vert SW_{m,\tau}f(t_{1})-SW_{m,\tau}f(t_{2})\right\Vert _{2}\geq\frac{\sqrt{m+1}\delta\left|t_{2}-t_{1}\right|}{4}\min\left\{ 1,\frac{\delta\left|t_{2}-t_{1}\right|}{64L}\right\} .
\]

\end{claim}

\begin{proof}

Define $g:\mathbb{R}\to\mathbb{R}$ by $g(t)=f(t_{1}+t)-f(t_{2}+t)$.
Then Assumption~\ref{ass:fp_fpp_nonzero} gives that 
\begin{align*}
\left|g(t)\right| & =\left|f(t_{1}+t)-f(t_{2}+t)\right|\geq\delta\left|t_{1}-t_{2}\right|,\\
\left|g'(t)\right| & =\left|f'(t_{1}+t)-f'(t_{2}+t)\right|\geq\delta\left|t_{1}-t_{2}\right|.
\end{align*}
Also, 
\[
\left\Vert SW_{m,\tau}f(t_{1})-SW_{m,\tau}f(t_{2})\right\Vert _{2}=\left\Vert SW_{m,\tau}g(0)\right\Vert _{2}.
\]
Then from above claim, 
\[
\left\Vert SW_{m,\tau}g(0)\right\Vert _{2}\geq\frac{\sqrt{m+1}\delta\left|t_{2}-t_{1}\right|}{4}\min\left\{ 1,\frac{\delta\left|t_{2}-t_{1}\right|}{64L}\right\} .
\]

\end{proof}

\begin{claim}
\label{claim: Claim C.3}
Suppose $f$ satisfies for all $t\in \mathbb{T}$, either $\left|f'(t)\right|\geq\delta$
or $\left|f''(t)\right|\geq\delta$ holds, and $\left|f''(t)\right|,\left|f'''(t)\right|\leq L$. Then for $m,\tau$ satisfying $(m+1)\tau > \frac{\delta}{2L}$ and $\tau < \frac{\delta}{8L}$,
\[
\left\Vert (SW_{m,\tau}f)^{\prime}\right\Vert _{2}\geq\frac{\sqrt{m+1}\delta}{2}\min\left\{ 1,\frac{\delta}{32L}\right\}.
\]
\end{claim}

\begin{proof}

Observe that 
\[
(SW_{m,\tau}f)^{\prime}=(SW_{m,\tau}f')
\]
and then it is trivial from the above Claim \ref{claim:time_delay_norm_lower}.

\end{proof}

\begin{claim}

Let $\gamma$ be arc-length parametrization of $SW_{m,\tau}f$. Then
\[
\left\Vert \gamma''\right\Vert _{2}\leq\frac{2\left\Vert (SW_{m,\tau}f)^{\prime\prime}\right\Vert _{2}}{\left\Vert (SW_{m,\tau}f)^{\prime}\right\Vert _{2}^{2}}.
\]
In particular, suppose $f$ satisfies that for all $t \in \mathbb{T}$, either $\left|f'(t)\right|\geq\delta$
or $\left|f''(t)\right|\geq\delta$ holds, and $\left\vert f''(t)\right\vert, \vert f'''(t)\vert \leq L$.
Then for $m,\tau$ satisfying $(m+1)\tau > \frac{\delta}{2L}$ and $\tau < \frac{\delta}{8L}$
\[
\left\Vert \gamma''\right\Vert _{2}\leq\frac{8L}{\sqrt{m+1}\delta^{2}\min\left\{ 1,\frac{\delta^{2}}{1024L^{2}}\right\} }.
\]

\end{claim}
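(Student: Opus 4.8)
The plan is to treat $SW_{m,\tau}f$ as a parametrized curve $c(t):=SW_{m,\tau}f(t)$ in $\mathbb{R}^{m+1}$ and carry out the classical arc-length reparametrization computation, then insert the velocity lower bound from the preceding Claim. First I would set $s(t)=\int\Vert c'(u)\Vert_{2}\,du$ so that $\gamma(s(t))=c(t)$; this is legitimate because the preceding Claim gives $\Vert c'\Vert_{2}\geq\sqrt{m+1}(\delta-C_{\tau}\tau)>0$ for $\tau$ small, so $c$ is regular and the reparametrization is well defined. Differentiating $\gamma(s(t))=c(t)$ twice via the chain rule yields $c'=s'\gamma'$ and $c''=(s')^{2}\gamma''+s''\gamma'$, whence
\[
\gamma''=\frac{c''-s''\gamma'}{(s')^{2}}.
\]

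For the general inequality I would use that $\gamma'$ is a unit vector and that $s''=\frac{d}{dt}\Vert c'\Vert_{2}=\gamma'\cdot c''$, so that $|s''|\leq\Vert c''\Vert_{2}$. Applying the triangle inequality in the numerator gives
\[
\Vert\gamma''\Vert_{2}=\frac{\Vert c''-s''\gamma'\Vert_{2}}{(s')^{2}}\leq\frac{\Vert c''\Vert_{2}+|s''|}{\Vert c'\Vert_{2}^{2}}\leq\frac{2\Vert c''\Vert_{2}}{\Vert c'\Vert_{2}^{2}},
\]
which is precisely the stated bound, the factor $2$ arising from this crude triangle-inequality step rather than the sharper Pythagorean estimate for curvature.

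For the ``in particular'' statement I would note that differentiation commutes with the sliding window, so $c''=(SW_{m,\tau}f)''=SW_{m,\tau}f''$ and hence $\Vert c''\Vert_{2}^{2}=\sum_{j=0}^{m}f''(t+j\tau)^{2}\leq(m+1)L_{2}^{2}$, giving $\Vert c''\Vert_{2}\leq\sqrt{m+1}\,L_{2}$; simultaneously the preceding Claim yields $\Vert c'\Vert_{2}\geq\sqrt{m+1}(\delta-C_{\tau}\tau)$. Substituting both into the general bound produces
\[
\Vert\gamma''\Vert_{2}\leq\frac{2\sqrt{m+1}\,L_{2}}{(m+1)(\delta-C_{\tau}\tau)^{2}}=\frac{2L_{2}}{\sqrt{m+1}(\delta-C_{\tau}\tau)^{2}}.
\]
There is no genuine obstacle here: the argument is the standard curvature-as-acceleration computation combined with two estimates already established. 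The only points requiring care are verifying regularity (nonvanishing $c'$, which is where the small-$\tau$ hypothesis and the previous velocity lower bound enter) so that the arc-length reparametrization is valid, and correctly identifying $s''=\gamma'\cdot c''$ so that the triangle inequality delivers exactly the claimed constant $2$.
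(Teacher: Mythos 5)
Your proposal is correct and follows essentially the same route as the paper: both perform the standard arc-length reparametrization, obtain $\gamma''=\bigl(c''-\langle\gamma',c''\rangle\gamma'\bigr)/\Vert c'\Vert_{2}^{2}$, extract the factor $2$ via the triangle inequality together with Cauchy--Schwarz, and then substitute $\Vert c''\Vert_{2}\leq\sqrt{m+1}\,L_{2}$ and the velocity lower bound $\Vert c'\Vert_{2}\geq\sqrt{m+1}(\delta-C_{\tau}\tau)$ from the preceding Claim. No gaps.
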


\begin{proof}

Let $\gamma(s)=SW_{m,\tau}f(t(s))$, then 
\[
\frac{d\gamma}{ds}=\frac{dSW_{m,\tau}f}{dt}\frac{dt}{ds}=(SW_{m,\tau}f)^{\prime}\frac{dt}{ds}.
\]
Then $\left\Vert \frac{d\gamma}{ds}\right\Vert _{2}=1$, so 
\[
\frac{dt}{ds}=\left\Vert (SW_{m,\tau}f)^{\prime}\right\Vert _{2}^{-1},
\]
and 
\[
\frac{d\gamma}{ds}=\frac{(SW_{m,\tau}f)^{\prime}}{\left\Vert (SW_{m,\tau}f)^{\prime}\right\Vert _{2}}.
\]
And then, 
\[
\frac{d^{2}\gamma}{ds^{2}}=\frac{\frac{d}{ds}(SW_{m,\tau}f)^{\prime}\left\Vert (SW_{m,\tau}f)^{\prime}\right\Vert _{2}-(SW_{m,\tau}f)^{\prime}\frac{d}{ds}\left\Vert (SW_{m,\tau}f)^{\prime}\right\Vert _{2}}{\left\Vert (SW_{m,\tau}f)^{\prime}\right\Vert _{2}^{2}}.
\]
Then 
\[
\frac{d}{ds}(SW_{m,\tau}f)^{\prime}=\frac{d}{dt}(SW_{m,\tau}f)^{\prime}\frac{dt}{ds}=\frac{(SW_{m,\tau}f)^{\prime\prime}}{\left\Vert (SW_{m,\tau}f)^{\prime}\right\Vert _{2}},
\]
and 
\[
\frac{d}{ds}\left\Vert (SW_{m,\tau}f)^{\prime}\right\Vert _{2}=\frac{d}{dt}\left\Vert (SW_{m,\tau}f)^{\prime}\right\Vert _{2}\frac{dt}{ds}=\frac{\frac{d}{dt}\left\Vert (SW_{m,\tau}f)^{\prime}\right\Vert _{2}^{2}}{2\left\Vert (SW_{m,\tau}f)^{\prime}\right\Vert _{2}^{2}}.
\]
And hence 
\[
\frac{d^{2}\gamma}{ds^{2}}=\frac{(SW_{m,\tau}f)^{\prime\prime}}{\left\Vert (SW_{m,\tau}f)^{\prime}\right\Vert _{2}^{2}}-\frac{(SW_{m,\tau}f)^{\prime}\frac{d}{dt}\left\Vert (SW_{m,\tau}f)^{\prime}\right\Vert _{2}^{2}}{2\left\Vert (SW_{m,\tau}f)^{\prime}\right\Vert _{2}^{4}}.
\]
Now, 
\[
\frac{d}{dt}SW_{m,\tau}f(t)=\begin{bmatrix}f'(t)\\
f'(t+\tau)\\
\vdots\\
f'(t+m\tau)
\end{bmatrix},\qquad\frac{d^{2}}{dt^{2}}SW_{m,\tau}f(t)=\begin{bmatrix}f''(t)\\
f''(t+\tau)\\
\vdots\\
f''(t+m\tau)
\end{bmatrix},
\]
and hence 
\begin{align*}
\left\Vert (SW_{m,\tau}f)^{\prime}\right\Vert _{2}^{2} & =\sum_{j=0}^{m}f'(t+j\tau)^{2},\\
\frac{d}{dt}\left\Vert (SW_{m,\tau}f)^{\prime}\right\Vert _{2}^{2} & =\sum_{j=0}^{m}2f'(t+j\tau)f''(t+j\tau)=2\left\langle (SW_{m,\tau}f)^{\prime},(SW_{m,\tau}f)^{\prime\prime}\right\rangle .
\end{align*}
and therefore, 
\[
\frac{d^{2}\gamma}{ds^{2}}=\frac{(SW_{m,\tau}f)^{\prime\prime}}{\left\Vert (SW_{m,\tau}f)^{\prime}\right\Vert _{2}^{2}}-\frac{(SW_{m,\tau}f)^{\prime}\left\langle (SW_{m,\tau}f)^{\prime},(SW_{m,\tau}f)^{\prime\prime}\right\rangle }{\left\Vert (SW_{m,\tau}f)^{\prime}\right\Vert _{2}^{4}}.
\]
And then by Cauchy-Schwarz, 
\[
\left\Vert \gamma''\right\Vert _{2}\leq\frac{\left\Vert (SW_{m,\tau}f)^{\prime\prime}\right\Vert _{2}}{\left\Vert (SW_{m,\tau}f)^{\prime}\right\Vert _{2}^{2}}+\frac{\left\Vert (SW_{m,\tau}f)^{\prime}\right\Vert _{2}^{2}\left\Vert (SW_{m,\tau}f)^{\prime\prime}\right\Vert _{2}}{\left\Vert (SW_{m,\tau}f)^{\prime}\right\Vert _{2}^{4}}=\frac{2\left\Vert (SW_{m,\tau}f)^{\prime\prime}\right\Vert _{2}}{\left\Vert (SW_{m,\tau}f)^{\prime}\right\Vert _{2}^{2}}.
\]
Then 
\[
\left\Vert (SW_{m,\tau}f)^{\prime\prime}\right\Vert _{2}=\sqrt{\sum_{j=0}^{m}f''(t+j\tau)^{2}}\leq L\sqrt{m+1}.
\]
Then from above Claim, 
\[
\left\Vert (SW_{m,\tau}f)^{\prime}(t)\right\Vert _{2}^{2}\geq\frac{(m+1)\delta^{2}}{4}\min\left\{ 1,\frac{\delta^{2}}{1024L^{2}}\right\} .
\]
And hence we have
\begin{align*}
\left\Vert \gamma''\right\Vert _{2} & \leq\frac{2\left\Vert (SW_{m,\tau}f)^{\prime\prime}\right\Vert _{2}}{\left\Vert (SW_{m,\tau}f)^{\prime}\right\Vert _{2}^{2}}\\
 & \leq\frac{8L}{\sqrt{m+1}\delta^{2}\min\left\{ 1,\frac{\delta^{2}}{1024L^{2}}\right\} }.
\end{align*}

\end{proof}

\begin{lemma}

Suppose $f\in C^{2}$ satisfies Assumption~\ref{ass:fp_fpp_nonzero}
and $\epsilon$-non-periodic. Then $SW_{m,\tau}f$ is injective and
\[
{\rm reach}(SW_{m,\tau}f(\mathbb{T}))\geq\frac{\sqrt{m+1}}{2}\min\left\{ \epsilon,C_{\delta,L}\right\} ,
\]
where $C_{\delta,L}$ is some constant depending only on $\delta$
and $L$.

\end{lemma}

\begin{proof}

Let $\gamma:[0,S_{\max}]$ be arc-length parametrization of $SW_{m,\tau}f$
on $\mathbb{T}$. The reach of $\gamma$ is lower bounded by $\min\{R,R'\}$,
when following condition holds \cite{aamari2019estimatingreachmanifold}:
\[
\text{for all }s\in\mathbb{R},\qquad\left\Vert \gamma''(s)\right\Vert _{2}\leq\frac{1}{R},
\]
\[
\text{for all }s_{1},s_{2}\in\mathbb{R},\qquad|s_{1}-s_{2}|\geq\pi R\text{ implies }\left\Vert \gamma(s_{1})-\gamma(s_{2})\right\Vert _{2}\geq2R'.
\]
Above Claim implies that 
\[
\left\Vert \gamma''(s)\right\Vert _{2}\leq\frac{8L_{2}}{\sqrt{m+1}\delta^{2}\min\left\{ 1,\frac{\delta^{2}}{1024L^{2}}\right\} }.
\]
Also, $\left\Vert (SW_{m,\tau}f)'\right\Vert _{2}$ can be bounded
as 
\[
\left\Vert (SW_{m,\tau}f)'\right\Vert _{2}\leq L_{1}\sqrt{m+1},
\]
hence for $s_{1},s_{2}\in\mathbb{R}$, $\left|s_{1}-s_{2}\right|\geq\frac{\pi\sqrt{m+1}\delta^{2}}{8L}\min\left\{ 1,\frac{\delta^{2}}{1024L^{2}}\right\} $
implies that 
\[
\left|t(s_{1})-t(s_{2})\right|\geq\frac{\left|s_{1}-s_{2}\right|}{L\sqrt{m+1}}=\frac{\pi\delta^{2}}{8}\min\left\{ 1,\frac{\delta^{2}}{1024L^{2}}\right\} .
\]
If $\frac{\pi\delta^{2}}{8}\min\left\{ 1,\frac{\delta^{2}}{1024L^{2}}\right\} \geq C\epsilon$,
then $\epsilon$-non-periodic condition gives that 
\[
\left\Vert SW_{m,\tau}f(t_{1})-SW_{m,\tau}f(t_{2})\right\Vert _{2}\geq\sqrt{m+1}\epsilon.
\]
If $\frac{\pi\delta^{2}}{8}\min\left\{ 1,\frac{\delta^{2}}{1024L^{2}}\right\} \leq C\epsilon$,
then above condition implies that 
\begin{align*}
\left\Vert SW_{m,\tau}f(t_{1})-SW_{m,\tau}f(t_{2})\right\Vert _{2} & \geq\frac{\sqrt{m+1}\delta\left|t_{2}-t_{1}\right|}{4}\min\left\{ 1,\frac{\delta\left|t_{2}-t_{1}\right|}{64L}\right\} \\
 & \geq\frac{\pi\sqrt{m+1}\delta^{3}}{32}\min\left\{ 1,C_{\delta,L}^{\prime}\right\} .
\end{align*}
Hence we have that 
\[
{\rm reach}(\gamma)\geq\frac{\sqrt{m+1}}{2}\min\left\{ \epsilon,C_{\delta,L}\right\} ,
\]
where $C_{\delta,L}$ is some constant depending only on $\delta$
and $L$. Also, $\gamma$ satsfies that $|s_{1}-s_{2}|\geq\pi R$
implying $\left\Vert \gamma(s_{1})-\gamma(s_{2})\right\Vert _{2}\geq2R$
implies that $\gamma$ is injective, hence $SW_{m,\tau}f$ is injective
as well.

\end{proof}

\begin{proof}[Proof of Theorem~\ref{ass:fp_fpp_nonzero}]

Above Lemma implies that $SW_{m,\tau}f$ is injective on $\mathbb{T}$
and 
\[
{\rm reach}(SW_{m,\tau}f(\mathbb{T}))\geq\frac{\sqrt{m+1}}{2}\min\left\{ \epsilon,C_{\delta,L}\right\} .
\]
Hence for any $t<\frac{\sqrt{m+1}}{2}\min\left\{ \epsilon,C_{\delta,L}\right\} $,
$(SW_{m,\tau}f(\mathbb{T}))^{t}$ deformation retracts to $SW_{m,\tau}f(\mathbb{T})$,
which is homeomorphic to the interval $\mathbb{T}$. Hence $(SW_{m,\tau}f(\mathbb{T}))^{t}$
is contractible.

\end{proof}

\begin{lemma}

Let $\Xi>0$, and Suppose $f\in C^{2}$ satisfies Assumption~\ref{ass:fp_fpp_nonzero}
and $(\Xi,\epsilon)$-periodic. Then $SW_{m,\tau}f$ is injective
on $[0,\Xi)$, and 
\[
{\rm reach}(SW_{m,\tau}f(\mathbb{T}))\geq\frac{\sqrt{m+1}}{2}\min\left\{ \epsilon,C_{\delta,L}\right\} ,
\]
where $C_{\delta,L}$ is some constant depending only on $\delta$
and $L$.

\end{lemma}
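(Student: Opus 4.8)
The plan is to mirror the argument of the preceding (non-periodic) Lemma, replacing the straight-line separation estimate by one adapted to the closed-curve geometry. Throughout I work in the genuinely $\Xi$-periodic setting, so that $f(t+\Xi)=f(t)$ and $SW_{m,\tau}f$ descends to a map on $\mathbb{R}/\Xi\mathbb{Z}$; its image is then a closed curve, and the arc-length reparametrization $\gamma:[0,S_{\max}]\to\mathbb{R}^{m+1}$ traces a topological circle of total length $S_{\max}$. The target is $R=\frac{\sqrt{m+1}}{2}\min\{\epsilon-C_\tau\tau,\ (\delta-C_\tau\tau)^2/L_2\}$, and I will verify the two hypotheses of the reach criterion of \cite{aamari2019estimatingreachmanifold} for this $R$, now read on the circle.

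First I record the local bounds, which are insensitive to periodicity since they depend only on Assumption~\ref{ass:fp_fpp_nonzero}. From the Claim lower-bounding the speed, $\|(SW_{m,\tau}f)'\|_2\ge\sqrt{m+1}(\delta-C_\tau\tau)$, together with $\|(SW_{m,\tau}f)''\|_2\le L_2\sqrt{m+1}$, the curvature Claim gives $\|\gamma''\|_2\le\frac{2L_2}{\sqrt{m+1}(\delta-C_\tau\tau)^2}$, which is exactly $\|\gamma''\|_2\le 1/R$. I also keep the upper speed bound $\|(SW_{m,\tau}f)'\|_2\le L_1\sqrt{m+1}$ for converting arc length to the $t$-parameter. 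These are identical to the non-periodic case.

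The one genuinely new ingredient is the separation estimate. For fixed $t_1,t_2$ I set $g(t)=f(t_1+t)-f(t_2+t)$, so that $\|SW_{m,\tau}f(t_1)-SW_{m,\tau}f(t_2)\|_2=\|SW_{m,\tau}g(0)\|_2$. Because the two arguments $t_1+t$ and $t_2+t$ differ by the fixed amount $t_1-t_2$ for every $t$, the $(\Xi,\epsilon)$-periodic hypothesis guarantees that whenever $\min_{n\in\mathbb{Z}}|t_1-t_2+n\Xi|\ge\frac{\pi\epsilon}{2\sup_t\|f'(t)\|_2}$, the function $g$ satisfies $|g(t)|\ge\epsilon$ or $|g'(t)|\ge\epsilon$ for \emph{all} $t$. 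Applying the first Claim (the lower bound on $\|SW_{m,\tau}g(0)\|_2$) to $g$ then yields $\|SW_{m,\tau}f(t_1)-SW_{m,\tau}f(t_2)\|_2\ge\sqrt{m+1}(\epsilon-C_\tau\tau)$. This is the periodic analogue of the non-periodic separation Claim, the crucial difference being that the controlling quantity is the circle distance $\min_{n}|t_1-t_2+n\Xi|$ rather than $|t_1-t_2|$.

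It then remains to assemble these into the reach criterion on the closed curve. For the separation hypothesis, if the arc-length distance between $s_1,s_2$ along the circle is at least $\pi R$, then the speed bound $\|(SW_{m,\tau}f)'\|_2\le L_1\sqrt{m+1}$ forces the corresponding $t$-parameter circle distance $\min_n|t(s_1)-t(s_2)+n\Xi|\ge\frac{\pi R}{L_1\sqrt{m+1}}$ to exceed the threshold in the $(\Xi,\epsilon)$-periodic definition, exactly as in the non-periodic conversion; the separation estimate above then gives $\|\gamma(s_1)-\gamma(s_2)\|_2\ge 2R$. Both hypotheses hold, so $\mathrm{reach}(SW_{m,\tau}f(\mathbb{T}))\ge R$, and injectivity on the fundamental domain $[0,\Xi)$ follows as before, since positive reach together with $\gamma$ being an immersion forces the closed curve to be embedded. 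I expect the main obstacle to be the bookkeeping at this last step: one must check that arc-length distance $\ge\pi R$ translates into the \emph{circle} distance $\min_n|\,\cdot\,+n\Xi|$ exceeding the threshold (not merely the line distance), which is precisely where genuine $\Xi$-periodicity is used to make $\gamma$ close up and where $(\Xi,\epsilon)$-periodicity, rather than the purely local regularity of Assumption~\ref{ass:fp_fpp_nonzero}, enters.
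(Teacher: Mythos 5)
Your proposal is correct and follows essentially the same route as the paper: the same reach criterion for closed curves from \cite{aamari2019estimatingreachmanifold}, the same curvature bound $\|\gamma''\|_2\le 2L_2/(\sqrt{m+1}(\delta-C_\tau\tau)^2)$, the same conversion from arc-length to $t$-parameter circle distance via the speed bound, and the same use of the $(\Xi,\epsilon)$-periodic condition to obtain the separation $\sqrt{m+1}(\epsilon-C_\tau\tau)$. Your explicit derivation of the periodic separation estimate via $g(t)=f(t_1+t)-f(t_2+t)$ is in fact slightly more careful than the paper, which only states this step for the non-periodic case and asserts the periodic analogue without proof.
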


\begin{proof}

Let $\gamma:[0,S_{\max}]$ be arc-length parametrization of $SW_{m,\tau}f$
on $[0,\Xi]$. The reach of $\gamma$ is lower bounded by $\min\{R,R'\}$,
when following condition holds \cite{aamari2019estimatingreachmanifold}:
\[
\text{for all }s\in\mathbb{R},\qquad\left\Vert \gamma''(s)\right\Vert _{2}\leq\frac{1}{R},
\]
\[
\text{for all }s_{1},s_{2}\in\mathbb{R},\qquad|s_{1}-s_{2}|\geq\pi R\text{ and }S_{\max}-|s_{1}-s_{2}|\geq\pi R\text{ implies }\left\Vert \gamma(s_{1})-\gamma(s_{2})\right\Vert _{2}\geq2R'.
\]
Above Claim implies that 
\[
\left\Vert \gamma''(s)\right\Vert _{2}\leq\frac{8L_{2}}{\sqrt{m+1}\delta^{2}\min\left\{ 1,\frac{\delta^{2}}{1024L^{2}}\right\} }.
\]
Also, $\left\Vert (SW_{m,\tau}f)'\right\Vert _{2}$ can be bounded
as 
\[
\left\Vert (SW_{m,\tau}f)'\right\Vert _{2}\leq L_{1}\sqrt{m+1},
\]

hence for $s_{1},s_{2}\in\mathbb{R}$, $\left|s_{1}-s_{2}\right|\geq\frac{\pi\sqrt{m+1}\delta^{2}}{8L}\min\left\{ 1,\frac{\delta^{2}}{1024L^{2}}\right\} $
and $\left|s_{1}-s_{2}\right|\leq S_{\max}-\pi\frac{\pi\sqrt{m+1}\delta^{2}}{8L}\min\left\{ 1,\frac{\delta^{2}}{1024L^{2}}\right\} $
implies that 
\[
\min_{n\in\mathbb{N}}\left|t(s_{1})-t(s_{2})-n\Xi\right|\geq\frac{\left|s_{1}-s_{2}\right|}{L\sqrt{m+1}}=\frac{\pi\delta^{2}}{8}\min\left\{ 1,\frac{\delta^{2}}{1024L^{2}}\right\} .
\]
If $\frac{\pi\delta^{2}}{8}\min\left\{ 1,\frac{\delta^{2}}{1024L^{2}}\right\} \geq K\epsilon$,
then $\epsilon$-non-periodic condition gives that 
\[
\left\Vert SW_{m,\tau}f(t_{1})-SW_{m,\tau}f(t_{2})\right\Vert _{2}\geq\sqrt{m+1}\epsilon.
\]
If $\frac{\pi\delta^{2}}{8}\min\left\{ 1,\frac{\delta^{2}}{1024L^{2}}\right\} \leq K\epsilon$,
then above Claim implies that 
\begin{align*}
\left\Vert SW_{m,\tau}f(t_{1})-SW_{m,\tau}f(t_{2})\right\Vert _{2} & \geq\frac{\sqrt{m+1}\delta\left|t_{2}-t_{1}\right|}{4}\min\left\{ 1,\frac{\delta\left|t_{2}-t_{1}\right|}{64L}\right\} \\
 & \geq\frac{\pi\sqrt{m+1}\delta^{3}}{32}\min\left\{ 1,C_{\delta,L}^{\prime}\right\} .
\end{align*}

Hence we have that 
\[
{\rm reach}(\gamma)\geq\frac{\sqrt{m+1}}{2}\min\left\{ \epsilon,C_{\delta,L}\right\} ,
\]
where $C_{\delta,L}$ is some constant depending only on $\delta$
and $L$. Also, $\gamma$ satsfies that $|s_{1}-s_{2}|\geq\pi R$
and $S_{\max}-|s_{1}-s_{2}|\geq\pi R$ implying $\left\Vert \gamma(s_{1})-\gamma(s_{2})\right\Vert _{2}\geq2R$
implies that $\gamma$ is injective on $[0,S_{\max})$, hence $SW_{m,\tau}f$
is injective on $[0,\Xi)$ as well.

\end{proof}

\begin{proof}[Proof of Theorem~\ref{thm:periodic_circle}]

Above Lemma implies that $SW_{m,\tau}f$ is injective on $[0,\Xi)$
and 
\[
{\rm reach}(SW_{m,\tau}f([0,\Xi)))\geq\frac{\sqrt{m+1}}{2}\min\left\{ \epsilon,C_{\delta,L}\right\} .
\]
Since $f(0)=f(\Xi)$, $SW_{m,\tau}f(0)=SW_{m,\tau}f(\Xi)$ as well.
Hence $SW_{m,\tau}f$ can be understood as a homeomorphism from $S^{1}$
to $SW_{m,\tau}f([0,\Xi])$. Hence for any 
\[
t<\frac{\sqrt{m+1}}{2}\min\left\{ \epsilon,C_{\delta,L}\right\} ,
\]
$(SW_{m,\tau}f(\mathbb{T}))^{t}$ deformation retracts to $SW_{m,\tau}f(\mathbb{T})$,
which is homeomorphic to the circle $S^{1}$.

\end{proof}

\section{Proofs for Section \ref{sec:subsample_Confidence_bound}}

To prove Theorem \ref{thm:nonStandconfidence} we check the assumptions of Theorem \ref{thm:confidenceBound}

\begin{lemma}
\label{lem:Assumption_Checking}
For fixed $m$ and sufficiently small $\tau>0$, $M_{m,\tau}$ satisfies:

(1) $\mathrm{reach}(M_{m,\tau})>0$

(2) $\rho(x,t) = \frac{P(B(x, \frac{t}{2}))}{t}$ for $x \in M_{m,\tau}$ is differentiable for $t \in (0, t_0)$ and right differentiable at zero. $\rho(x,t)$ is bounded from zero and infinity. Moreover, there exists $C$ such that
\[\sup_x \sup_{0\le t\le t_0} \bigg\vert \frac{\partial \rho(x,t)}{\partial t}\bigg\vert \le C < \infty\]
\end{lemma}

\begin{proof}
    (1) Let us denote $SW_{m,\tau} f(t) = \gamma(t)$ to emphasize that it is a curve. By our results in Section \ref{sec:top_period}, for the functions satisfying Assumption \ref{ass:periodic_either} and \ref{ass:fp_fpp_nonzero}, $M_{m,\tau}$ is a manifold with positive reach.

    \bigskip
    (2) The derivative of the curve $\gamma$ is $SW_{m,\tau}(f')$. By Claim \ref{claim: Claim C.3}, the norm of $\gamma'$ is bounded below.
    \[\Vert\gamma'\Vert_2 \ge \frac{\sqrt{m+1}\delta}{2}\min\left\{ 1,\frac{\delta}{32L}\right\}= C.\]

    Let $t_0 > 0$ be an real number satisfying the following conditions:
    \begin{enumerate}
        \item $t_0 < \mathrm{reach}(M_{m,\tau})$
        \item For every $s \in [0,\Xi]$, the distance $d_{\mathbb{R}^{m+1}} (\gamma(s), \gamma(s+\delta))$ increases for $0<\delta < t_0$ and decreases for $-t_0 < \delta < 0$.
    \end{enumerate}
    
    Let $x = \gamma(s_0)$. Then the inverse function of $d_{s_0}(t)$ is $t \rho(x,t)$ which measures the geodesic length of $\gamma$ on the time interval $[s_0 - t/2, s_0 + t/2]$.
    \[d_{s_0}(t) = \int_{s_0 - t/2}^{s_0 + t/2} \Vert \gamma' \Vert_2 \qquad d_{s_0} (t) \rho(x,d_{s_0}(t)) = t \]
    Since $\gamma$ is $C^2$ curve,
    \[d_{s_0}'(t)\bigg(\rho(x,d_{s_0}(t)) + d_{s_0}(t) \frac{\partial \rho(x,d_{s_0}(t))}{\partial t}\bigg) = 1\]
    and
    \[\bigg\lvert\frac{\partial \rho(x,d_{s_0}(t))}{\partial t}\bigg\rvert 
    = \bigg\lvert\frac{d_{s_0}(t) - td_{s_0}'(t)}{d'_{s_0}(t)(d_{s_0}(t))^2}\bigg\rvert 
    = \bigg\lvert \bigg(\frac{1}{t^2}\int_{0}^{t}d_{s_0}'(s)-d_{s_0}'(t) ds\bigg)\cdot \frac{1}{(d_{s_0}(t))^2/t^2} \bigg\vert \bigg\vert \frac{1}{d_{s_0}'(t)}\bigg\vert\le \frac{L}{2C^3}.\]
\end{proof}

\medskip

\begin{proof}[Proof of Theorem \ref{thm:nonStandconfidence}]
    Consider the time-delayed embedded points as sampled points from the support $M_{m,\tau}$. By Theorem \ref{thm:confidenceBound} and the definition of $c_{\alpha}$,
    \[
        P\bigg(d_B(\mathcal{P}(X_{m,\tau}), \mathcal{P}(M_{m,\tau})) > c_{\alpha} \bigg) \le \alpha + O\bigg(\frac{b}{n}\bigg)^{1/4}.
    \]
\end{proof}

\section{Proofs for Section \ref{sec:Confidence_bound_Infinite_Behavior}}\label{sec:Proof of Section 4.2}

To prove the statements of Section \ref{sec:Confidence_bound_Infinite_Behavior}, it is enough to show the theorem holds when $f$ satisfies
\[\int_{0}^{\Xi} f = 0 \qquad \frac{1}{\Xi}\int_0^{\Xi}f^2 = 1\]
since we can translate and scale $f$ to satisfy those conditions.

We bring the results from \cite{Perea2015}. Let $X_{m,\tau_m} = SW_{m,\tau_m} f([0,\Xi])$ and $Y_{m,\tau_m} = SW_{m,\tau_m} S_{m/2}f([0,\Xi])$. Here $S_{m/2}f$ stands for truncated fourier series with frequency level $m/2$. Recall that $\tilde{X}_{m,\tau_m}$ is scaled version of $X_{m,\tau_m}$, which is defined by $\tilde{X}_{m,\tau_m} = \frac{1}{\sqrt{m+1}} X_{m,\tau_m}$.

Also, the work of \cite{Perea2015} introduced centering and normalization of each sampled point which is defined as following. 
\[c(x)=x-\frac{x^t1}{1^t1}1\qquad n(x)=\frac{x}{\sqrt{x^tx}}\]
Denote the centralize, normalizing procedure as standardization. For the notational convenience, we shall write $std$ for the composition of maps $n\circ c$. Finally, let $\bar{Y}_{m,\tau_m} = \mathrm{std}(Y_{m,\tau_m})$ be the pointwise standardized versions of $Y_m$.

Proposition 4.2 and Theorem 5.6 in \cite{Perea2015} states that if $f\in C^2$ is $\Xi$-periodic,
\[
d_H(X_{m,\tau_m}, Y_{m,\tau_m}) \le \sqrt{\frac{6}{m+1}}\lVert f''-S_{m/2}f'' \rVert_2 \qquad
\bar{Y}_{m,\tau_m} = \frac{Y_{m,\tau_m}}{\sqrt{m+1}\lVert S_{m/2} f \rVert_2}.
\]
    
\begin{proof}[Proof of Theorem \ref{thm:calphaConvergence}]
    Combining two bounds, we get
    \begin{align*}
        d_H \bigg(\tilde{X}_{m,\tau_m}, \bar{Y}_{m,\tau_m}\bigg) 
        &\le \frac{\sqrt{6}}{(m+1)\Vert S_{m/2} f\Vert_2}\Vert f''-S_{m/2}f''\Vert_2 + d_H\bigg(\tilde{X}_{m,\tau_m},\frac{X_{m,\tau_m}}{\sqrt{m+1}\Vert S_{m/2}f\Vert_2}\bigg)
        \\&\le \frac{2\sqrt{6}L_2}{m+1} + L_0\bigg(1-\frac{1}{\Vert S_{m/2}f\Vert_2}\bigg) := E_{m,1}.
    \end{align*}
    
    Moreover, \cite{Perea2015} showed that the distances of two points in different dimensions satisfies following inequality. For $\bar{y}_1, \bar{y}_2 \in \bar{Y}_{m,\tau_m}$ and $\bar{y}_1', \bar{y}_2' \in \bar{Y}_{m',\tau_{m'}}$ are the image of $t_1, t_2 \in [0,\Xi]$ in different dimensions,
    \[\big\vert \Vert \bar{y}_1 - \bar{y}_2 \Vert - \Vert \bar{y}_{1}' - \bar{y}_{2}' \Vert \big\vert \le 2 \bigg(\frac{1}{\Vert S_{m/2} f\Vert_2} + \frac{1}{\Vert S_{m'/2} f \Vert_2} \bigg) \Vert S_{m'/2}f - S_{m/2} f \Vert_2 :=E_{m,m',2}.\]

    Define 
    \[L^Y_{m,\tau_m} (t) = \frac{1}{\binom{n}{b}} \sum_{j=1}^{\binom{n}{b}} I(d_H(\bar{Y}_{m,\tau_m} , \bar{Y}_{m,\tau_m, b}^{(j)})>t)\]
    and $c_{\alpha}^{Y,m} = 2 (L_{m,\tau_m}^Y)^{-1}(\alpha)$. Then by the preceding inequalities, if the $j$-th subsample satisfies $d_H(\tilde{X}_{m,\tau_m}, \tilde{X}_{m,\tau_m,b}^{(j)}) > t$, then $d_H(\bar{Y}_{m,\tau_m}, \bar{Y}_{m,\tau_m,b}^{(j)})>t -E_{m,1}$, and the same holds for the other direction. Therefore,
    \[
    \bigg\vert d_H(\tilde{X}_{m,\tau_m}, \tilde{X}_{m,\tau_m,b}^{(j)}) - d_H(\bar{Y}_{m,\tau_m}, \bar{Y}_{m,\tau_m,b}^{(j)})\bigg\vert < E_{m,1}
    \]
    and
    \[
    \vert \tilde{c}_{\alpha}^{m} - c_{\alpha}^{Y,m} \vert < E_{m,1}.
    \]
    Applying the same method to $Y_{m,\tau_m}$ and $Y_{m',\tau_{m'}}$, we obtain
    \[
    \vert c_{\alpha}^{Y,m} - c_{\alpha}^{Y,m'} \vert < E_{m,m',2}.
    \]
    Combining these two inequalities,
    
    \[
        \vert \tilde{c}_{\alpha}^m - \tilde{c}_{\alpha}^{m'} \vert \le E_{m,1}+E_{m',1} + E_{m,m',2}.
    \]
    Since $E_{m,1} \rightarrow 0$ as $m\rightarrow \infty$ and $E_{m,m',2} \rightarrow \infty$ as $\min(m,m')\rightarrow \infty$, $\{\tilde{c}_{\alpha}^m\}$ forms Cauchy sequence.
\end{proof}
\bigskip

Define
\[
E_m = E_{m,1} + \lim_{m' \rightarrow \infty} E_{m,m',2}.
\]
Then by Theorem \ref{thm:calphaConvergence}, $\vert \tilde{c}_{\alpha}^m - \tilde{c}_{\alpha} \vert \le E_m$

Next, we prove Theorem \ref{thm:infiniteconfidence} (a).

\begin{proof}[Proof of Theorem \ref{thm:infiniteconfidence} (a)]
As in the proof of Theorem \ref{thm:calphaConvergence} the inequalities $d_H(\tilde{X}_{m,\tau_m}, \bar{Y}_{m,\tau_m}) \le E_{m,1}$. Moreover, \cite{Perea2015} showed that for $m< m'$ there exists a projection map $P : \mathbb{R}^{m'+1} \rightarrow \mathbb{R}^{m'+1}$ and an isometry $Q : P(\mathbb{R}^{m'}) \rightarrow \mathbb{R}^m$ that satisfies $Q\circ P(\bar{Y}_{m',\tau_{m'}}) = \bar{Y}_{m,\tau_m}$. Thus as in \cite{Perea2015}, persistence diagrams of $\tilde{X}_{m,\tau_m}$ forms Cauchy sequence with respect to the bottleneck distance.

Finally, the same logic can be applied to $\tilde{M}_{m,\tau_m}$.
\end{proof}

To prove Theorem \ref{thm:infiniteconfidence} (b), we first prove several statements.

\begin{claim}
\label{claim:Epsilon_control}
For a set of $n$ sampled points $T \subset [0,\Xi]$, define
\[
\varphi_m (T) := \min_{\{i,j\} \neq \{k,l\} } \bigg\vert \Vert \tilde{x}_{m,\tau_m}^{(i)} - \tilde{x}_{m,\tau_m}^{(j)} \Vert - \Vert \tilde{x}_{m,\tau_m}^{(k)} - \tilde{x}_{m,\tau_m}^{(l)}\Vert \bigg\vert,
\]
where $\tilde{x}_{m,\tau_m}^{(i)},\tilde{x}_{m,\tau_m}^{(j)},\tilde{x}_{m,\tau_m}^{(k)},\tilde{x}_{m,\tau_m}^{(l)} \in \tilde{X}_{m,\tau_m}$. Then for all $\mathcal{\epsilon}_0 > 0$, there exists $m_0 \in \mathbb{N}$ depending only on $n$ and $\epsilon_0$ such that if $m \ge m_0$,
\[
P(\varphi_{m}(T) < 3E_m) < \epsilon_0.
\]
\end{claim}

\begin{proof}
By the maximal argument,
\[
P(\varphi_m(T) < 3E_m) \le n^4 P(\vert \Vert \tilde{x}_{m,\tau_m}^{(i)} - \tilde{x}_{m,\tau_m}^{(j)} \Vert - \Vert \tilde{x}_{m,\tau_m}^{(k)} - \tilde{x}_{m,\tau_m}^{(l)}\Vert \vert < 3E_m).
\]
So we can analyze this probability by randomly sampling four points from $\tilde{M}_{m,\tau_m}$. Denoting these points by $\tilde{x}_1, \tilde{x}_2,\tilde{x}_3,\tilde{x}_4 \in \tilde{M}_{m,\tau_m}$,
\[
P(\varphi_m(T) < 3E_m) \le n^4 P(\vert \Vert \tilde{x}_1 - \tilde{x}_2 \Vert - \Vert \tilde{x}_3 - \tilde{x}_4 \Vert \vert < 3E_m).
\]
To handle this probability, we transform the points into corresponding points of the \textbf{truncated} function. Let $\{y_i\}_{i=1,2,3,4}$ be the points corresponding to $\{x_i\}_{i=1,2,3,4}$, where $x_i = SW_{m,\tau_m}f(t_i)$ and $y_i = SW_{m, \tau_m} S_{m/2} f(t_i)$. Also, $\{\bar{y}_i \}_{i=1,2,3,4}$ denote the standardized points.

Based on our previous results, $\vert\Vert \tilde{x}_i - \tilde{x}_j\Vert - \Vert \bar{y}_i - \bar{y}_j \Vert \vert < E_m$ holds. Thus,
\[
P(\varphi_m(T) < 3E_m) \le n^4 P(\vert \Vert \tilde{x}_1 - \tilde{x}_2 \Vert - \Vert \tilde{x}_3 - \tilde{x}_4 \Vert \vert < 3E_m) 
\le n^4 P\bigg(\vert \Vert \bar{y}_1 - \bar{y}_2 \Vert - \Vert \bar{y}_3 - \bar{y}_4 \Vert \vert < 5E_m\bigg).
\]
Expanding fourier series of $f$, since $f$ is translated, $a_0 = 0$ and
\[f(t) = \sum_{j=0}^{\infty} a_i \cos(\omega jt) + b_j \sin(\omega jt).\]
where $\omega = \frac{2\pi}{\Xi}$. According to \cite{Perea2015},
$\bar{y}_m = \mathrm{std}\circ SW_{m,\tau_m}S_{m/2} f(t)$ can be expressed as
\[
\bar{y}_m = \sum_{j=1}^{m/2} (a_j \cos(\omega jt) + b_j \sin(\omega jt)) \frac{\textbf{u}_j}{\Vert \textbf{u}_j \Vert} + (b_j \cos(\omega jt) - a_j \sin(\omega jt)) \frac{\mathbf{v}_j}{\Vert \mathbf{v}_j \Vert}.
\]
Thus,
\[\Vert \bar{y}_1 - \bar{y}_2 \Vert = \sum_{j=1}^{m/2} (a_j^2 + b_j^2) (2-2\cos(\omega j(t_1 - t_2))).\]
Let $r_j^2 = a_j^2 + b_j^2$ and 
\[g_m(t) = \bigg(\sum_{j=1}^{m/2} r_j^2\bigg) - r_1^2 \cos(\omega t) - r_{2}^2 \cos(2\omega t) - \cdots - r_{m/2}^2 \cos(\frac{m}{2}\omega t),\]
\[g(t) = 1 - \sum_{j=1}^{\infty} r_{j}^2 \cos(\omega jt).\]
Since $f$ is a $C^{2}$ function, $r_j = o(\frac{1}{j})$ holds so $g_m(t)$ uniformly converges to $g(t)$. Then $\Vert \bar{y}_1 - \bar{y}_2 \Vert = 2g_m(t_1 - t_2)$ so,
\[
P(\varphi_m(T) < 3E_m) \le n^4 P\bigg(\vert g_m(t_1 - t_2) - g_m (t_3 - t_4) \vert < \frac{5}{2}E_m \bigg),
\]
\[
P(\varphi_m(T) < 3E_m) \le n^4 P\bigg(\vert g_m(t_1 - t_2) - g_m(t_3 - t_4) \vert < \frac{5}{2} E_m \bigg) \le \Xi n^4 P\bigg(\vert g_m(t) - g_m(s) \vert < \frac{5}{2} E_m \bigg),
\] 
because $P_{m_{Leb}\times m_{Leb}}(\vert t_1 - t_2 \vert \in A) \le \Xi \cdot P_{m_{Leb}}(\vert t\vert \in A)$ holds for Lebesgue measure $m_{Leb}$ on $[0,\Xi]$. Using $\vert g_m (t) - g(t) \vert \le \Vert f - S_{m/2}f \Vert_2^2 =: E_m'$,
\[
P(\varphi_m(T) < 3E_m ) \le \Xi n^4 P\bigg(\vert g(t) - g(s) \vert < \frac{5}{2} E_m + 2E_m' \bigg).
\] 
By our assumption on $g$, $P(\vert g(t) - g(s) \vert = 0) = 0$. Therefore, for any $\epsilon_0 > 0$ there exists $\delta_0 >0$ that $P(\vert g(t) - g(s) \vert < \delta_0) < \frac{\epsilon_0}{2n^4}$. Thus for $m \ge m_0$, so that $\frac{5}{2}E_m+2E_m' < \delta_0$,
\[
P(\varphi_m(T) < 3E_m ) < \epsilon_0.
\]
\end{proof}

\begin{claim}
\label{claim:Epsilon_never_die}
For $n$-sampled points $T \subset \mathbb{T}$, define
\[
\varphi (T) := \min_{\{i,j\} \neq \{k,l\} } \bigg\vert \lim_{m\rightarrow \infty}\Vert \tilde{x}_{m,\tau_m}^{(i)} - \tilde{x}_{m,\tau_m}^{(j)} \Vert - \lim_{m\rightarrow \infty}\Vert \tilde{x}_{m,\tau_m}^{(k)} - \tilde{x}_{m,\tau_m}^{(l)}\Vert \bigg\vert,
\]
then $\varphi(T) > 0$ with probability 1.
\end{claim}

\begin{proof}
Since 
\[
\bigg\vert \bigg(\lim_{m_0\rightarrow \infty} \Vert \tilde{x}_{m_0,\tau_{m_0}}^{(i)} - \tilde{x}_{m_0,\tau_{m_0}}^{(j)}\Vert\bigg) - \Vert \tilde{x}_{m,\tau_m}^{(i)} - \tilde{x}_{m,\tau_m}^{(j)}\Vert \bigg\vert < E_m.
\]
$\vert\varphi_m(T) - \varphi(T) \vert < 2E_m$. Thus, for $\epsilon_0 > 0$, finding $m_0$ as Claim \ref{claim:Epsilon_control}, $\varphi(T) > E_{m_0}$ with probability $1-\epsilon_0$. Since $\epsilon_0$ is arbitrary, $\varphi(T) > 0$ with probability 1.
\end{proof}

\begin{proof}[Proof of Theorem \ref{thm:infiniteconfidence} (b)]
For $\epsilon_0 > 0$, let $m_0$ as claim \ref{claim:Epsilon_control}. Then, we can decompose
\begin{align*}
P\bigg(d_B(\mathcal{P}_{\infty}(X) , \mathcal{P}_{\infty}(M)) > c_{\alpha} \bigg) & \le P\bigg(d_B(\mathcal{P}(\tilde{X}_{m,\tau_m}), \mathcal{P}_{\infty}(X))>\frac{1}{3} \varphi(T)\bigg) \\
&+P\bigg(d_B(\mathcal{P}_{\infty}(M), \mathcal{P}(\tilde{M}_{m,\tau_m})) > \frac{1}{3} \varphi(T)\bigg) \\
&+P\bigg(\vert \tilde{c}_{\alpha}^m - \tilde{c}_{\alpha}\vert > \frac{1}{3}\varphi(T)\bigg) \\
&+ P\bigg(d_B(\mathcal{P}(\tilde{X}_{m,\tau_m}),\mathcal{P}(\tilde{M}_{m,\tau_m}))>\tilde{c}_{\alpha}^m - \varphi(T) \bigg).
\end{align*}
By Claim \ref{claim:Epsilon_never_die}, $\varphi(T) > 0$ with probability 1. As $m \rightarrow \infty$, first three entries becomes zero. So there exists $m_1 > 0$ such that if $m > m_1$,
\[
P\bigg(d_B(\mathcal{P}_{\infty}(\tilde{X}) , \mathcal{P}_{\infty}(\tilde{M})) > \tilde{c}_{\alpha} \bigg) \le P\bigg(d_B(\mathcal{P}(\tilde{X}_{m,\tau_m}),\mathcal{P}(\tilde{M}_{m,\tau_m}))>\tilde{c}_{\alpha}^m - \varphi(T) \bigg).
\]
    
We claim that if $\varphi_m(T) \ge 3E_m$ then $\tilde{c}_{\alpha}^m - \varphi(T) \ge \tilde{c}_{\alpha + 3b/n}^{m}$ with at least probability $1-\epsilon_0$. Above inequality is equivalent to
\[
\tilde{L}_{m,\tau_m} \bigg(\frac{1}{2}(\tilde{c}_{\alpha}^m - \varphi(T)) \bigg) \le \alpha+ \frac{3b}{n}.
\]

To prove this, we count possible cases of subsamples that satisfies
\[
\frac{1}{2} (\tilde{c}_{\alpha}^m - \varphi(T)) < d_H (\tilde{X}_{m,\tau_m,b}^{(j)}, \tilde{X}_{m,\tau_m} ) < \frac{1}{2} \tilde{c}_{\alpha}^m.
\]

Assume
\[
\frac{1}{2} (\tilde{c}_{\alpha}^m - \varphi(T)) < d_H (\tilde{X}_{m,\tau_m,b}^{(j_1)}, \tilde{X}_{m,\tau_m} ) < d_H (\tilde{X}_{m,\tau_m,b}^{(j_2)}, \tilde{X}_{m,\tau_m} )<\cdots < d_H (\tilde{X}_{m,\tau_m,b}^{(j_N)}, \tilde{X}_{m,\tau_m} )< \frac{1}{2} \tilde{c}_{\alpha}^m 
\]
then $(N-1)\varphi_m(T) < \frac{1}{2}\varphi(T)$ holds so 
\[
N < 1+\frac{\varphi(T)}{2\varphi_m(T)} < 1+\frac{\varphi_m(T) + 2E_{m}}{2\varphi_m(T)} =\frac{3}{2} + \frac{E_m}{\varphi_m (T)} < 2.
\]
Thus if such $\frac{1}{2} (\tilde{c}_{\alpha}^m - \varphi(T)) < d_H (\tilde{X}_{m,\tau_m,b}^{(j)}, \tilde{X}_{m,\tau_m} ) < \frac{1}{2} \tilde{c}_{\alpha}^N$ subsample exists, its value of Hausdorff distance can exist at most 1.

Now, such Hausdorff distance emerges as a distance of two points in $\tilde{X}_{m,\tau_m}$. Every distance of two point will be different with probability 1, so the maximum number of subsamples that can have the distance is at most
\[\binom{n}{b} - \binom{n-2}{b} -\binom{n-2}{b-2} < \frac{3b}{n} \binom{n}{b}\]
since that two point need to locate at different group.

Thus the function $\tilde{L}_{m,\tau_m}$ can vary by at most $\frac{3b}{n}$ so the claim holds. Therefore, for $m > \max \{m_0 , m_1 \}$
\begin{align*}
    P\bigg(d_B(\mathcal{P}_{\infty}(\tilde{X}) , \mathcal{P}_{\infty}(\tilde{M})) > \tilde{c}_{\alpha} \bigg) &\le P\bigg(d_B(\mathcal{P}(\tilde{X}_{m,\tau_m}),\mathcal{P}(\tilde{M}_{m,\tau_m}))>\tilde{c}_{\alpha}^m - \varphi(T) \bigg) \\
    &\le P(\varphi_m(T) < 3E_m) \\
    &\quad+ P\bigg(d_B(\mathcal{P}(\tilde{X}_{m,\tau_m}) , \mathcal{P}(\tilde{M}_{m,\tau_m})) > \tilde{c}_{\alpha}^m - \varphi(T) \,\wedge\,\epsilon_m(T) \ge 3E_m\bigg)\\
    &\le \epsilon_0 + P(d_B(\mathcal{P}(\tilde{X}_{m,\tau_m}),\mathcal{P}(\tilde{M}_{m,\tau_m}))>\tilde{c}_{\alpha+3b/n}^m) \\
    &\le \alpha + \epsilon_0 + \frac{3b}{n} + O\bigg(\frac{b}{n}\bigg)^{1/4}.
\end{align*}
since $\epsilon_0$ is arbitrary, 
\[
P\bigg(d_B(\mathcal{P}_{\infty}(\tilde{X}) , \mathcal{P}_{\infty}(\tilde{M})) > \tilde{c}_{\alpha} \bigg)\le \alpha + O\bigg(\frac{b}{n}\bigg)^{1/4}.
\] 
\end{proof}

\section{Proofs for Section \ref{sec:testing}}
\begin{proof}[Proof of Theorem \ref{thm:type1err}]
    Under $H_0$, by Corollary \ref{cor:non-periodic_homology}, there is no real persistent diagram in $[0,a)\times \mathbb{R}$. For given condition $a\ge 4c_\alpha$, distance between sample point from true persistence in any point in $[a,\infty)\times \mathbb{R}$ and the region $R$ is farther than $c_\alpha$. More precisely, the distance between two set is $a-c_\alpha$, so minimum condition to state type Ⅰ error is $a\ge 2c_\alpha$. Take any real persistence points $(b,d)\in [a,\infty)\times \mathbb{R}$, and sample persistence point $(x,y)$ given by $(b,d)$, the 
    \begin{align*}
        P((x,y)\in R)&\le P(d_B((x,y),(b,d))> c_\alpha)\\
        &\le \alpha+O\left(\frac{b}{n}\right)^{1/4}.
    \end{align*}
    Type Ⅰ error happens with probability
    \begin{align*}
        P(|\mathcal{P}(X_m)\cap R|=1|H_0)&=P((x,y)\in R\,|\,b\ge a\text{ or } b=d)\\
        &\le \alpha+O\left(\frac{b}{n}\right)^{1/4}.
    \end{align*}
\end{proof}
\begin{proof}[Proof of Theorem \ref{thm:type2err}]
    Under $H_1$, by Corollary\ref{cor:periodic_homology}, there is a real persistent diagram of point $(0,d)$ with $d\ge a$ in $[0,a)\times \mathbb{R}$. For given condition $a\ge 4c_\alpha$, distance between sample point of $(0,d)$ and the region $R$ is farther than $c_\alpha$.
    \begin{align*}
        \sup_{(x,y)\in R}d_B((0,a),(x,y))&\le \sup_{(x,y)\in R}d_B((0,4c_\alpha),(x,y))\\
        &\le d_B((0,4c_\alpha),(c_\alpha, 3c_\alpha))=c_\alpha
    \end{align*}
    Take any sample persistence point $(x,y)$ given by $(0,d)$, the 
    \begin{align*}
        P((x,y)\notin R)&\le P(d_B((x,y),(0,d))> c_\alpha)\\
        &\le \alpha+O\left(\frac{b}{n}\right)^{1/4}.
    \end{align*}
    Type Ⅱ error happens with probability
    \begin{align*}
        P(|\mathcal{P}(X_m)\cap R|=0|H_1)&=P((x,y)\notin R\,|\,b=0, d\ge a)\\
        &\le \alpha+O\left(\frac{b}{n}\right)^{1/4}.
    \end{align*}
\end{proof}

\section{Additional explanation for Section \ref{sec:simul}}
\label{sec:err_exp}
In this section, we briefly explain the error terms of the synthetic data. The error $e$ follows the Gaussian or Laplacian distribution with mean 0 and scale parameter $\lambda$. (That is, as the scale parameter increases, it is more likely for $e$ to take larger value.) The error is denoted as additive type if the observed value $y$ is constructed by adding the error, i.e. $y = f(t) + e$, while it is denoted as multiplicative when $y = f(t) (1+e)$.

\end{document}